\def\@captype{figure}
\newtheorem {prop}{Proposition}[section]
\newtheorem {lemme}[prop]{Lemma}
\newtheorem {theoreme}[prop]{Theorem}
\newtheorem {corollaire}[prop]{Corollary}
\newtheorem {exemple}[prop]{Example}
\newtheorem {remarque}[prop]{Remark}
\newtheorem {prop*}{Proposition}
\newtheorem {thm*}[prop*]{Theorem}
\newcommand\cset{\mathbb C}
\newcommand\zset{\mathbb Z}
\newcommand\nset{\mathbb N}
\newcommand\ilie {\mathfrak {i}}
\newcommand\blie {\mathfrak {b}}
\newcommand\glie {\mathfrak {g}}
\newcommand\hlie {\mathfrak {h}}
\newcommand\plie {\mathfrak {p}}
\newcommand \calf {{\mathcal F}}
\newcommand \calc {{\mathcal C}}
\newcommand \cala {{\mathcal A}}
\newcommand \calu {{\mathcal U}}
\newcommand \call {{\mathcal L}}
\def\htrait{\omit\hrulefill}
\def\vtrait{\vrule height 10pt depth 6pt} 
\def\sboite#1{\vtrait\hbox to 16pt{\hfil #1\hfil}} 
\def\hboite#1{\hbox to 16pt{\hfil #1\hfil}} 
\def\vt{\vrule height 10pt depth 6pt} 
\def\sb#1{\vt\hbox to 36pt{\hfil #1\hfil}} 
\def\hb#1{\hbox to 36pt{\hfil #1\hfil}}
\newenvironment{Tiles}[1]{\setlength{\unitlength}{#1pt}\begin{array}{l}}{\end{array}}
\newcommand{\Dbloc}[1]{\begin{picture}(20,20)#1\end{picture}}
\newcommand{\Dtext}[2]{\makebox(20,20)[#1]{\scriptsize $#2$}}
\newcommand{\Ddoubletext}[2]{\makebox(40,20)[#1]{\scriptsize $#2$}}
\newcommand{\Ttop}{\put(0,20){\line(1,0){20}}}
\newcommand{\Tbottom}{\put(0,0){\line(1,0){20}}}
\newcommand{\Tleft}{\put(0,0){\line(0,1){20}}}
\newcommand{\Tright}{\put(20,0){\line(0,1){20}}}
\newcommand{\Tantidiagonal}{\put(0,0){\line(1,1){20}}}
\newcommand{\Ttopdots}{\put(4,20){\circle*{0.1}}\put(8,20){\circle*{0.1}}\put(12,20){\circle*{0.1}}\put(16,20){\circle*{0.1}}}
\newcommand{\Tbottomdots}{\put(4,0){\circle*{0.1}}\put(8,0){\circle*{0.1}}\put(12,0){\circle*{0.1}}\put(16,0){\circle*{0.1}}}
\newcommand{\Tleftdots}{\put(0,4){\circle*{0.1}}\put(0,8){\circle*{0.1}}\put(0,12){\circle*{0.1}}\put(0,16){\circle*{0.1}}}
\newcommand{\Trightdots}{\put(20,4){\circle*{0.1}}\put(20,8){\circle*{0.1}}\put(20,12){\circle*{0.1}}\put(20,16){\circle*{0.1}}}
\newcommand{\Ttopleftdot}{\put(0,20){\circle*{0.1}}}
\newcommand{\Ttoprightdot}{\put(20,20){\circle*{0.1}}}
\newcommand{\Tbottomleftdot}{\put(0,0){\circle*{0.1}}}
\newcommand{\Tbottomrightdot}{\put(20,0){\circle*{0.1}}}
\newcommand{\Dskip}{\\ [-4.5pt]}
\newcommand{\Dspace}{\Dbloc{}}
\newskip\Einheit \Einheit=0.6cm
\newdimen\xdim \newdimen\ydim \newdimen\PfadD@cke \newdimen\Pfadd@cke
\def\PfadDicke#1{\PfadD@cke#1 \divide\PfadD@cke by2 \Pfadd@cke\PfadD@cke \multiply\PfadD@cke by2}
\long\def\LOOP#1\REPEAT{\def\BODY{#1}\ITERATE}
\def\ITERATE{\BODY \let\next\ITERATE \else\let\next\relax\fi \next}
\let\REPEAT=\fi
\def\Punkt{\hbox{\raise-2pt\hbox to0pt{\hss\scriptsize$\bullet$\hss}}}
\def\DuennPunkt(#1,#2){\unskip
  \raise#2 \Einheit\hbox to0pt{\hskip#1 \Einheit
          \raise-2.5pt\hbox to0pt{\hss\normalsize$\bullet$\hss}\hss}}
\def\NormalPunkt(#1,#2){\unskip
  \raise#2 \Einheit\hbox to0pt{\hskip#1 \Einheit
          \raise-3pt\hbox to0pt{\hss\large$\bullet$\hss}\hss}}
\def\DickPunkt(#1,#2){\unskip
  \raise#2 \Einheit\hbox to0pt{\hskip#1 \Einheit
          \raise-4pt\hbox to0pt{\hss\Large$\bullet$\hss}\hss}}
\def\Kreis(#1,#2){\unskip
  \raise#2 \Einheit\hbox to0pt{\hskip#1 \Einheit
          \raise-4pt\hbox to0pt{\hss\Large$\circ$\hss}\hss}}
\def\Diagonale(#1,#2)#3{\unskip\leavevmode
  \xcoord#1\relax \ycoord#2\relax
      \raise\ycoord \Einheit\hbox to0pt{\hskip\xcoord \Einheit
         \unitlength\Einheit
         \line(1,1){#3}\hss}}
\def\AntiDiagonale(#1,#2)#3{\unskip\leavevmode
  \xcoord#1\relax \ycoord#2\relax \advance\xcoord by -0.05\relax
      \raise\ycoord \Einheit\hbox to0pt{\hskip\xcoord \Einheit
         \unitlength\Einheit
         \line(1,-1){#3}\hss}}
\def\Pfad(#1,#2),#3\endPfad{\unskip\leavevmode
  \xcoord#1 \ycoord#2 \thicklines\ZeichnePfad#3\endPfad\thinlines}
\def\ZeichnePfad#1{\ifx#1\endPfad\let\next\relax
  \else\let\next\ZeichnePfad
    \ifnum#1=1
      \raise\ycoord \Einheit\hbox to0pt{\hskip\xcoord \Einheit
         \vrule height\Pfadd@cke width1 \Einheit depth\Pfadd@cke\hss}%
      \advance\xcoord by 1
    \else\ifnum#1=2
      \raise\ycoord \Einheit\hbox to0pt{\hskip\xcoord \Einheit
        \hbox{\hskip-\PfadD@cke\vrule height1 \Einheit width\PfadD@cke depth0pt}\hss}%
      \advance\ycoord by 1
    \else\ifnum#1=3
      \raise\ycoord \Einheit\hbox to0pt{\hskip\xcoord \Einheit
         \unitlength\Einheit
         \line(1,1){1}\hss}
      \advance\xcoord by 1
      \advance\ycoord by 1
    \else\ifnum#1=4
      \raise\ycoord \Einheit\hbox to0pt{\hskip\xcoord \Einheit
         \unitlength\Einheit
         \line(1,-1){1}\hss}
      \advance\xcoord by 1
      \advance\ycoord by -1
    \else\ifnum#1=5
      \advance\xcoord by -1
      \raise\ycoord \Einheit\hbox to0pt{\hskip\xcoord \Einheit
         \vrule height\Pfadd@cke width1 \Einheit depth\Pfadd@cke\hss}%
    \else\ifnum#1=6
      \advance\ycoord by -1
      \raise\ycoord \Einheit\hbox to0pt{\hskip\xcoord \Einheit
        \hbox{\hskip-\PfadD@cke\vrule height1 \Einheit width\PfadD@cke depth0pt}\hss}%
    \else\ifnum#1=7
      \advance\xcoord by -1
      \advance\ycoord by -1
      \raise\ycoord \Einheit\hbox to0pt{\hskip\xcoord \Einheit
         \unitlength\Einheit
         \line(1,1){1}\hss}
    \else\ifnum#1=8
      \advance\xcoord by -1
      \advance\ycoord by +1
      \raise\ycoord \Einheit\hbox to0pt{\hskip\xcoord \Einheit
         \unitlength\Einheit
         \line(1,-1){1}\hss}
    \fi\fi\fi\fi
    \fi\fi\fi\fi
  \fi\next}
\def\hSSchritt{\leavevmode\raise-.4pt\hbox to0pt{\hss.\hss}\hskip.2\Einheit
  \raise-.4pt\hbox to0pt{\hss.\hss}\hskip.2\Einheit
  \raise-.4pt\hbox to0pt{\hss.\hss}\hskip.2\Einheit
  \raise-.4pt\hbox to0pt{\hss.\hss}\hskip.2\Einheit
  \raise-.4pt\hbox to0pt{\hss.\hss}\hskip.2\Einheit}
\def\vSSchritt{\vbox{\baselineskip.2\Einheit\lineskiplimit0pt
\hbox{.}\hbox{.}\hbox{.}\hbox{.}\hbox{.}}}
\def\DSSchritt{\leavevmode\raise-.4pt\hbox to0pt{%
  \hbox to0pt{\hss.\hss}\hskip.2\Einheit
  \raise.2\Einheit\hbox to0pt{\hss.\hss}\hskip.2\Einheit
  \raise.4\Einheit\hbox to0pt{\hss.\hss}\hskip.2\Einheit
  \raise.6\Einheit\hbox to0pt{\hss.\hss}\hskip.2\Einheit
  \raise.8\Einheit\hbox to0pt{\hss.\hss}\hss}}
\def\dSSchritt{\leavevmode\raise-.4pt\hbox to0pt{%
  \hbox to0pt{\hss.\hss}\hskip.2\Einheit
  \raise-.2\Einheit\hbox to0pt{\hss.\hss}\hskip.2\Einheit
  \raise-.4\Einheit\hbox to0pt{\hss.\hss}\hskip.2\Einheit
  \raise-.6\Einheit\hbox to0pt{\hss.\hss}\hskip.2\Einheit
  \raise-.8\Einheit\hbox to0pt{\hss.\hss}\hss}}
\def\SPfad(#1,#2),#3\endSPfad{\unskip\leavevmode
  \xcoord#1 \ycoord#2 \ZeichneSPfad#3\endSPfad}
\def\ZeichneSPfad#1{\ifx#1\endSPfad\let\next\relax
  \else\let\next\ZeichneSPfad
    \ifnum#1=1
      \raise\ycoord \Einheit\hbox to0pt{\hskip\xcoord \Einheit
         \hSSchritt\hss}%
      \advance\xcoord by 1
    \else\ifnum#1=2
      \raise\ycoord \Einheit\hbox to0pt{\hskip\xcoord \Einheit
        \hbox{\hskip-2pt \vSSchritt}\hss}%
      \advance\ycoord by 1
    \else\ifnum#1=3
      \raise\ycoord \Einheit\hbox to0pt{\hskip\xcoord \Einheit
         \DSSchritt\hss}
      \advance\xcoord by 1
      \advance\ycoord by 1
    \else\ifnum#1=4
      \raise\ycoord \Einheit\hbox to0pt{\hskip\xcoord \Einheit
         \dSSchritt\hss}
      \advance\xcoord by 1
      \advance\ycoord by -1
    \else\ifnum#1=5
      \advance\xcoord by -1
      \raise\ycoord \Einheit\hbox to0pt{\hskip\xcoord \Einheit
         \hSSchritt\hss}%
    \else\ifnum#1=6
      \advance\ycoord by -1
      \raise\ycoord \Einheit\hbox to0pt{\hskip\xcoord \Einheit
        \hbox{\hskip-2pt \vSSchritt}\hss}%
    \else\ifnum#1=7
      \advance\xcoord by -1
      \advance\ycoord by -1
      \raise\ycoord \Einheit\hbox to0pt{\hskip\xcoord \Einheit
         \DSSchritt\hss}
    \else\ifnum#1=8
      \advance\xcoord by -1
      \advance\ycoord by 1
      \raise\ycoord \Einheit\hbox to0pt{\hskip\xcoord \Einheit
         \dSSchritt\hss}
    \fi\fi\fi\fi
    \fi\fi\fi\fi
  \fi\next}
\def\Koordinatenachsen(#1,#2){\unskip
 \hbox to0pt{\hskip-.5pt\vrule height#2 \Einheit width.5pt depth1 \Einheit}%
 \hbox to0pt{\hskip-1 \Einheit \xcoord#1 \advance\xcoord by1
    \vrule height0.25pt width\xcoord \Einheit depth0.25pt\hss}}
\def\Koordinatenachsen(#1,#2)(#3,#4){\unskip
 \hbox to0pt{\hskip-.5pt \ycoord-#4 \advance\ycoord by1
    \vrule height#2 \Einheit width.5pt depth\ycoord \Einheit}%
 \hbox to0pt{\hskip-1 \Einheit \hskip#3\Einheit 
    \xcoord#1 \advance\xcoord by1 \advance\xcoord by-#3 
    \vrule height0.25pt width\xcoord \Einheit depth0.25pt\hss}}
\def\Gitter(#1,#2){\unskip \xcoord0 \ycoord0 \leavevmode
  \LOOP\ifnum\ycoord<#2
    \loop\ifnum\xcoord<#1
      \raise\ycoord \Einheit\hbox to0pt{\hskip\xcoord \Einheit\Punkt\hss}%
      \advance\xcoord by1
    \repeat
    \xcoord0
    \advance\ycoord by1
  \REPEAT}
\def\Gitter(#1,#2)(#3,#4){\unskip \xcoord#3 \ycoord#4 \leavevmode
  \LOOP\ifnum\ycoord<#2
    \loop\ifnum\xcoord<#1
      \raise\ycoord \Einheit\hbox to0pt{\hskip\xcoord \Einheit\Punkt\hss}%
      \advance\xcoord by1
    \repeat
    \xcoord#3
    \advance\ycoord by1
  \REPEAT}
\def\Label#1#2(#3,#4){\unskip \xdim#3 \Einheit \ydim#4 \Einheit
  \def\lo{\advance\xdim by-.5 \Einheit \advance\ydim by.5 \Einheit}%
  \def\llo{\advance\xdim by-.25cm \advance\ydim by.5 \Einheit}%
  \def\loo{\advance\xdim by-.5 \Einheit \advance\ydim by.25cm}%
  \def\o{\advance\ydim by.25cm}%
  \def\ro{\advance\xdim by.5 \Einheit \advance\ydim by.5 \Einheit}%
  \def\rro{\advance\xdim by.25cm \advance\ydim by.5 \Einheit}%
  \def\roo{\advance\xdim by.5 \Einheit \advance\ydim by.25cm}%
  \def\l{\advance\xdim by-.30cm}%
  \def\r{\advance\xdim by.30cm}%
  \def\lu{\advance\xdim by-.5 \Einheit \advance\ydim by-.6 \Einheit}%
  \def\llu{\advance\xdim by-.25cm \advance\ydim by-.6 \Einheit}%
  \def\luu{\advance\xdim by-.5 \Einheit \advance\ydim by-.30cm}%
  \def\u{\advance\ydim by-.30cm}%
  \def\ru{\advance\xdim by.5 \Einheit \advance\ydim by-.6 \Einheit}%
  \def\rru{\advance\xdim by.25cm \advance\ydim by-.6 \Einheit}%
  \def\ruu{\advance\xdim by.5 \Einheit \advance\ydim by-.30cm}%
  #1\raise\ydim\hbox to0pt{\hskip\xdim
     \vbox to0pt{\vss\hbox to0pt{\hss$#2$\hss}\vss}\hss}%
}
\begin{document}

\title[Number of ``$udu$'' of a Dyck path and ad-nilpotent ideals]{Number of ``$udu$'' of a Dyck path and ad-nilpotent ideals of parabolic subalgebras of $sl_{l+1}(\cset)$}
\author{C\'eline RIGHI}
\address{UMR 6086 CNRS, D\'epartement de Math\'ematiques, T\'el\'eport 2 - BP
  30179, Boulevard Marie et Pierre Curie, 86962 Futuroscope
  Chasseneuil Cedex, France}
\email{celine.righi@math.univ-poitiers.fr}

\begin{abstract}
For an ad-nilpotent ideal $\ilie $ of a Borel subalgebra of $sl_{l+1}(\cset)$, we denote by 
$I_{\ilie}$ the maximal subset $I$ of the set of simple roots such that $\ilie$ is an ad-nilpotent ideal of the 
standard parabolic subalgebra $\plie_I$. We use the bijection of \cite{AKOP} between the set of ad-nilpotent ideals of a Borel subalgebra in $sl_{l+1}(\cset)$ and the set of Dyck paths of length $2l+2$, to explicit a bijection between ad-nilpotent ideals $\ilie$ of the 
Borel subalgebra such that $\sharp I_{\ilie}=r$ and the Dyck paths of length $2l+2$ having $r$ occurence ``$udu$''. We obtain 
also a duality between antichains of cardinality $p$ and $l-p$ in the set of positive roots.
\end{abstract}


\maketitle

\section{Introduction}
Let $M_{l+1}(\cset)$ be the set of $(l+1)$-by-$(l+1)$ matrices with coefficients in $\cset$, and $\glie$ be the simple Lie algebra $sl_{l+1}(\cset)$ consisting of elements of $M_{l+1}(\cset)$ whose trace is equal to zero. 
Let $\hlie$ be the maximal torus of $\glie$ consisting of trace zero diagonal matrices. Let $(E_{i,j})$ be the canonical basis 
of $M_{l+1}(\cset)$ and $(E_{i,j}^*)$ be its dual basis. For $1\leqslant i \leqslant l+1$, set $\epsilon_i=E_{i,i}^*$. Then $\Delta=\{\epsilon_i -\epsilon_j; 1\leqslant i,j\leqslant l+1, i\not= j\}$ is the root system associated to $(\glie,\hlie)$, and  $\Delta^+=\{\epsilon_i -\epsilon_j; 1\leqslant i<j\leqslant l+1\}$ is a system of positive roots. Denote by $\alpha_i=\epsilon_i-\epsilon_{i+1}$, for $i=1,\dots, l$. Then $\Pi=\{\alpha_1,
\dots, \alpha_l\}$ is the corresponding set of simple roots. For each $\alpha\in\Delta$, let 
$ \mathfrak{g}_{ \alpha}=\{x\in\glie; [h,x]=\alpha(h)x \mbox{ for all } h\in\hlie\}$ be the root space of $\glie$ relative to $\alpha$.

For  $I \subset \Pi$, set $\Delta_I=\zset I \cap\Delta$.
We fix the corresponding standard parabolic subalgebra :   
$$
\plie_I=\hlie\oplus\left(\bigoplus\limits_{\alpha \in \Delta_I \cup \Delta^+}\glie_{\alpha}\right).
$$

An ideal $\ilie $ of $\plie_I$ is ad-nilpotent if and only if for all $x\in \ilie$, $ad_{\plie_I} x$ is nilpotent. Since any ideal of $\plie_I$ is $\hlie$-stable, we can deduce easily that an ideal is ad-nilpotent if and only if it is nilpotent. Moreover, we have $\ilie =\bigoplus\limits_{\alpha \in \Phi} \glie_{\alpha}$, for some subset $\Phi\subset \Delta^+ \setminus \Delta_I$. 

A Dyck path of length $2n$ can be defined as a word of $2n$ letters $u$ or $d$, having the same number of $u$ and $d$, and such that there is always more $u$'s than $d$'s to the left of a letter.


Andrews, Krattenthaler, Orsina and Papi established in \cite{AKOP} a bijection between the set of ad-nilpotent ideals of the Borel 
subalgebra $\plie_{\emptyset}$ and the set of Dyck paths of length $2l+2$ which allows them to enumerate ad-nilpotent ideals of a 
fixed class of nilpotence. The purpose of this paper is to explain some applications of this correspondence for the ad-nilpotent ideals of parabolic subalgebras.

More precisely, let $\ilie$ be an ad-nilpotent ideal of the Borel subalgebra $\plie_{\emptyset}$. Denote by 
$I_{\ilie}$ the maximal subset $I\subset \Pi$ such that $\ilie$ is an ad-nilpotent ideal of $\plie_I$, the main result we prove here is the following theorem :

\begin{thm*}
There is a bijection between the ad-nilpotent ideals $\ilie$ of $\plie_{\emptyset}$ such that $\sharp I_{\ilie}=r$ and the Dyck paths of length $2l+2$ having $r$ occurence ``$udu$''.

\end{thm*}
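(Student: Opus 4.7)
The plan is to exhibit, via the AKOP bijection, a position-by-position correspondence between simple roots in $I_{\ilie}$ and $udu$-occurrences in the Dyck path associated to $\ilie$. The proof naturally splits into a combinatorial reformulation of $\alpha_i \in I_{\ilie}$, a review of how the AKOP bijection encodes $\Phi$ as a Dyck path, and the matching of the two local descriptions.

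First, I would translate the condition $\alpha_i \in I_{\ilie}$ into a condition on $\Phi$. Since $\ilie = \bigoplus_{\alpha \in \Phi} \glie_{\alpha}$ is automatically $\plie_\emptyset$-stable, the requirement that $\ilie$ is an ideal of $\plie_{\{\alpha_i\}}$ reduces to $[\glie_{-\alpha_i}, \ilie] \subseteq \ilie$. A short computation with root vectors in $sl_{l+1}$ shows this is equivalent to: $\alpha_i \notin \Phi$, together with the implication that whenever $\alpha \in \Phi$ satisfies $\alpha - \alpha_i \in \Delta^+$, then $\alpha - \alpha_i \in \Phi$. Encoding $\Phi$ through the weakly increasing sequence $k_j := \min\{k : \epsilon_j - \epsilon_k \in \Phi\}$, with $k_j := l+2$ when row $j$ is empty, a short case analysis reduces these conditions to
\[
(\ast) \qquad i+1 \notin \{k_1, \ldots, k_{l+1}\} \quad \text{and} \quad k_i = k_{i+1}.
\]

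Next, I would unpack the AKOP bijection in this language. By construction, that bijection associates to $\Phi$ a Dyck path $P = p_1 \cdots p_{2l+2}$ in which the $l+1$ up-steps are naturally indexed by the rows $j \in \{1, \ldots, l+1\}$ and the $l+1$ down-steps by the columns $c \in \{2, \ldots, l+2\}$; the relative order of the steps is driven by the function $j \mapsto k_j$, with the $j$-th up-step placed just before the block of down-steps for columns $c$ with $c \le k_j$ not yet emitted.

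I would then match the local pattern $udu$ to condition $(\ast)$. A $udu$-occurrence at positions $(t, t+1, t+2)$ consists of a row-step, a column-step, and a row-step in three consecutive positions of $P$. Under the AKOP interleaving such a triple is adjacent precisely when the two row-indices $j$ and $j+1$ satisfy $k_j = k_{j+1}$ (otherwise further column-steps would be inserted between them) and the intermediate column index $c$ satisfies $c \notin \{k_1, \ldots, k_{l+1}\}$ (otherwise the row-step for the row with starting column $c$ would be placed before the down-step at position $t+1$). Setting $c = i+1$ identifies these requirements with $(\ast)$, yielding the desired correspondence $\alpha_i \leftrightarrow (t, t+1, t+2)$.

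The main technical obstacle is the bookkeeping in the matching step: one must write out the AKOP interleaving rule as an explicit formula for the positions of $u$'s and $d$'s, and verify that the assignment $\alpha_i \mapsto (t, t+1, t+2)$ is simultaneously well-defined, injective, and surjective---in particular, one must account for the admissible overlap in which the last $u$ of one $udu$-triple serves as the first $u$ of the next. Once this is in place, the main theorem follows from the combination of $(\ast)$ and the matching.
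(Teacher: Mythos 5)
Your first step is sound: the reformulation $(\ast)$ of ``$\alpha_i\in I_{\ilie}$'' is equivalent to Lemma 5.1 of the paper and, after the bookkeeping on repeated indices, to the formula $\sharp I_{\Phi}=l-2\sharp U-\sharp L$. The gap is in the second step. You assume that the AKOP bijection admits a direct ``interleaving'' description in which the up-steps, indexed by the rows $j$, and the down-steps, indexed by the columns $c$, occur in their natural orders and are merged according to the row minima $j\mapsto k_j$. No such description is justified, and the map it defines is (essentially) the boundary-path bijection $P$ of Section~2, not the AKOP map $D$ of Section~3: $D$ is built by nested peak insertions, its up-steps do not occur in row order, and the two maps genuinely differ --- Section~6 studies the composite $\sigma^{-1}\circ P^{-1}\circ D\circ\sigma$ precisely because it is a nontrivial duality. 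The theorem is false for $P$. Concretely, for $l=7$ and $\lambda=(5,3,1,1,1,0,0)$ one has $\Phi_{min}=\{\alpha_{1,3},\alpha_{2,5},\alpha_{5,7}\}$, hence $\sharp I_{\ilie}=2$; the path $D(\lambda)=uuduuddduududdud$ indeed has two occurrences of $udu$, but $P(\lambda)=uuuduuuddudduddd$ has only one. So any argument whose combinatorial core is ``rows and columns interleaved by $k_j$'' is proving the statement for the wrong bijection.

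Even granting a correct direct description of $D$, the proposed local matching does not hold. Here $k=(4,6,8,8,8,9,9,9)$ and $(\ast)$ holds exactly for $i=4$ and $i=6$ (consistently with $I_{\Phi}=\{\alpha_4,\alpha_6\}$), but if you label the up-steps of $D(\lambda)$ from left to right by $1,\dots,8$, its two $udu$'s sit between up-steps $2,3$ and $6,7$, and $k_2\neq k_3$. So there is no order-preserving, position-by-position correspondence ``$\alpha_i\leftrightarrow$ the $udu$ between rows $i$ and $i+1$''; the equality of the two counts is a global fact. The paper's proof is accordingly global: Proposition 4.1 tracks, through the iterative construction, how inserting $a_{j,t}$ peaks on a highest peak creates $a_{j,t}-1$ or $a_{j,t}-2$ occurrences of $udu$ according to whether the peak is a $u$-peak, giving $l-2\sharp\calu-\sharp\call$ occurrences in total, and Lemma 5.2 produces an explicit bijection $\Phi_{min}\to\cala$ carrying $U$ to $\calu$ and $L$ to $\call$. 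A repair of your approach would have to engage with the nested insertion structure of $D$ in an equivalent way; the ``bookkeeping'' you defer is not bookkeeping but the substance of the proof.
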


We then deduce a formula for the number of ad-nilpotent ideals of $\plie_{\emptyset}$ such that the cardinality of $I_{\ilie}$ is equal to $r$.

This paper is organized as follows : we first recall the natural bijection between $l$-partitions and Dyck paths of length $2l+2$, as in \cite{P}. In section 3, we recall the iterative construction of the bijection of \cite{AKOP}. Then, in section 4, we explain how to calculate the number of occurence ``$udu$'' of a Dyck path obtained by the previous construction. In section 5, we recall some facts of \cite{R} and \cite{CP} on ad-nilpotent ideals and we prove Theorem 1. Finally, in section 6, we establish a duality between ad-nilpotent ideals of $\plie_{\emptyset}$. Such a duality has already been constructed by Panyushev in \cite{P}, however, it is not the same as the one we have here.

\bigskip
\noindent
{\bf Acknowledgment.} This work was realized while I was visiting the Istituto Guido Castelnuvo di 
Matematica (Roma). I would like to thank the european program Liegrits for offering me the possibility to go there and the institute for its hospitality.

\section{Partitions and Dyck paths}\label{partition_Dyck_path}
 
In this section, we shall see how to generate a Dyck path from a partition. 

Recall that a partition is an $l$-tuple $\lambda=(\lambda_1, \lambda_2,\dots, \lambda_l)\in\nset^l$ such that $\lambda_1 \geqslant \lambda_2 \geqslant \dots\geqslant \lambda_l$. A partition will be called an $l$-partition if $\lambda_i\leqslant i$ for $i=1,\dots, l$.  

Partitions are usually represented by their Ferrers diagrams. Let $T_l$ be the Ferrers diagram of the $l$-partition $(l,l-1,\dots,1)$. Then the Ferrers diagram $F$ of any $l$-partition $\lambda$ can be viewed as a subdiagram of $T_l$. For example, for $l=5$, the Ferrers diagram of 
$\lambda=(3,1,1,0,0)$ is the subdiagram of $T_l$, whose boxes are denoted by some $\star$ :
$$
\overbrace{
\vbox{\offinterlineskip
\halign{#&#&#&#&#&#\cr
\htrait&\htrait&\htrait&\htrait&\htrait\cr
\sboite{$\star$}&\sboite{$\star$}&\sboite{$\star$}&\sboite{}&%
\sboite{}&\vtrait\cr
\htrait&\htrait&\htrait&\htrait&\htrait\cr
\sboite{$\star$}&\sboite{}&\sboite{}%
&\sboite{}&\vtrait\cr
\htrait&\htrait&\htrait&\htrait\cr
\sboite{$\star$}&\sboite{}&\sboite{}&\vtrait\cr
\htrait&\htrait&\htrait\cr
\sboite{}&\sboite{}&\vtrait\cr
\htrait&\htrait\cr
\sboite{}&\vtrait\cr
\htrait\cr
}
}
}^{l}
$$

Let $\lambda=(\lambda_1,\dots,\lambda_l)$ be an $l$-partition and let $F$ be its Ferrers diagram. We draw a dotted horizontal line from the top of the line $x+y=l+1$ to $F$ and a dotted vertical line from $F$ to the bottom of the line $x+y=l+1$. For example, when $\lambda=(5,3,1,1,1,0,0)$, we have :
$$
\begin{Tiles}{1}
\Dbloc{\Ttop\Tleft}\Dbloc{\Ttop}\Dbloc{\Ttop}\Dbloc{\Ttop\Tbottom}\Dbloc{\Ttop\Tbottom\Tright}\Dbloc{\Ttopleftdot\Ttopdots}\Dbloc{\Ttopleftdot\Ttopdots}\Dbloc{\Ttopleftdot\Ttopdots\Ttoprightdot\Tantidiagonal}\Dskip
\Dbloc{\Tleft}\Dbloc{\Tbottom}\Dbloc{\Tright\Tbottom}\Dspace\Dspace\Dbloc{}\Dbloc{\Tantidiagonal}\Dbloc{\Ddoubletext{c}{x+y=l+1}}\Dskip
\Dbloc{\Tleft\Tright}\Dbloc{}\Dbloc{}\Dbloc{}
\Dbloc{}\Dbloc{\Tantidiagonal}\Dskip
\Dbloc{\Tleft\Tright}\Dspace\Dspace\Dspace\Dbloc{\Tantidiagonal}\Dskip
\Dbloc{\Tleft\Tright\Tbottom}\Dspace\Dspace\Dbloc{\Tantidiagonal}\Dskip
\Dbloc{\Ttopleftdot\Tleftdots}\Dspace\Dbloc{\Tantidiagonal}\Dskip
\Dbloc{\Ttopleftdot\Tleftdots}\Dbloc{\Tantidiagonal}\Dskip
\Dbloc{\Ttopleftdot\Tleftdots\Tbottomrightdot\Tantidiagonal}\Dskip
\end{Tiles}
$$
\caption{}\label{fig_classique}

If we rotate the figure clockwise by $45$ degrees, we can easily see that we obtain  a Dyck path of length $2l+2$ called $P(\lambda)$ as in \cite{P}. This construction defines clearly a bijection $P : \lambda \mapsto P(\lambda)$ between $l$-partitions and Dyck paths of length $2l+2$. In the above example, the Dyck path $P(\lambda)$ is :
\begin{center}
$$
\Gitter(18,5)(0,0)
\Koordinatenachsen(18
,5)(0,0)
\Pfad(0,0),3334333443443444\endPfad
\Label\lu{0}(0,0)
\Label\u{\scriptstyle 1}(1,0)
\Label\u{\scriptstyle 2}(2,0)
\Label\u{\scriptstyle 3}(3,0)
\Label\u{\scriptstyle 4}(4,0)
\Label\u{\scriptstyle 5}(5,0)
\Label\u{\scriptstyle 6}(6,0)
\Label\u{\scriptstyle 7}(7,0)
\Label\u{\scriptstyle 8}(8,0)
\Label\u{\scriptstyle 9}(9,0)
\Label\u{\scriptstyle 10}(10,0)
\Label\u{\scriptstyle 11}(11,0)
\Label\u{\scriptstyle 12}(12,0)
\Label\u{\scriptstyle 13}(13,0)
\Label\u{\scriptstyle 14}(14,0)
\Label\u{\scriptstyle 15}(15,0)
\Label\u{\scriptstyle 16}(16,0)
\Label\l{\scriptstyle 1}(0,1)
\Label\l{\scriptstyle 2}(0,2)
\Label\l{\scriptstyle 3}(0,3)
\hskip10.5cm
$$
\end{center}


\section{AKOP-bijection}\label{section_AKOP_bijection}

Let $\lambda=(\lambda_1,\dots,\lambda_l)$ be a $l$-partition whose Ferrers diagram is $F$. We shall draw a dotted line associated to $\lambda$. We start at the top of the line $x+y=l+1$. We go left until we meet $F$. Then, we continue downwards until we reach $x+y=l+1$. Then we iterate the procedure  until we reach the bottom. 
For example, for $l=13$ and $\lambda=(10,10,9,6,5,4,4,3,1,1,1,1,0)$ :

\begin{center}
$$
\begin{Tiles}{0.9}
\Dbloc{\Ttop\Tleft}\Dbloc{\Ttop}\Dbloc{\Ttop}\Dbloc{\Ttop}\Dbloc{\Ttop}\Dbloc{\Ttop}\Dbloc{\Ttop}\Dbloc{\Ttop}\Dbloc{\Ttop}\Dbloc{\Ttop\Tright}\Dbloc{\Ttopleftdot\Ttopdots}\Dbloc{\Ttopleftdot\Ttopdots}\Dbloc{\Ttopleftdot\Ttopdots}\Dbloc{\Ttopleftdot\Ttopdots\Tantidiagonal}\Dskip
\Dbloc{\Tleft}\Dspace\Dspace\Dspace\Dspace\Dbloc{}\Dspace\Dspace\Dspace\Dbloc{\Tright\Tbottom}\Dspace\Dspace\Dbloc{\Tantidiagonal}\Dbloc{\Ddoubletext{c}{x+y=l+1}}\Dskip
\Dbloc{\Tleft}\Dspace\Dspace\Dspace\Dspace\Dbloc{}\Dbloc{\Tbottom}\Dbloc{\Tbottom}\Dbloc{\Tbottom\Tright}\Dbloc{\Trightdots\Tbottomrightdot}\Dspace\Dbloc{\Tantidiagonal}\Dskip
\Dbloc{\Tleft}\Dspace\Dspace\Dspace\Dspace\Dbloc{\Tbottom\Tright}\Dbloc{\Tbottomleftdot\Tbottomdots}\Dbloc{\Tbottomleftdot\Tbottomdots}\Dbloc{\Tbottomleftdot\Tbottomdots}\Dbloc{\Tbottomleftdot\Tbottomdots\Trightdots\Tbottomrightdot}\Dbloc{\Tantidiagonal\Tbottomleftdot}\Dskip
\Dbloc{\Tleft}\Dbloc{}\Dbloc{}\Dbloc{}\Dbloc{\Tbottom}\Dbloc{\Tleft}\Dspace\Dspace\Dspace\Dbloc{\Tantidiagonal}\Dskip
\Dbloc{\Tleft}\Dspace\Dspace\Dspace\Dbloc{\Tleft\Trightdots\Tbottomrightdot}\Dspace\Dspace\Dspace\Dbloc{\Tantidiagonal}\Dskip
\Dbloc{\Tleft}\Dspace\Dspace\Dspace\Dbloc{\Tleft\Trightdots\Tbottomrightdot}\Dspace\Dspace\Dbloc{\Tantidiagonal}\Dskip
\Dbloc{\Tleft}\Dbloc{\Tbottom}\Dbloc{\Tbottom\Tright}\Dbloc{\Ttop}\Dspace\Dbloc{\Tleftdots\Tbottomleftdot}\Dbloc{\Tantidiagonal}\Dskip
\Dbloc{\Tleft}\Dbloc{\Tleft\Tbottomleftdot\Tbottomdots}\Dbloc{\Tbottomleftdot\Tbottomdots}\Dbloc{\Tbottomleftdot\Tbottomdots}\Dbloc{\Tbottomleftdot\Tbottomdots\Tbottomrightdot}\Dbloc{\Tleftdots\Tantidiagonal}\Dskip
\Dbloc{\Tleft\Tright}\Dspace\Dspace\Dspace\Dbloc{\Tantidiagonal}\Dskip
\Dbloc{\Tleft\Tright}\Dspace\Dspace\Dbloc{\Tantidiagonal}\Dskip
\Dbloc{\Tleft\Tbottom\Tright}\Dspace\Dbloc{\Tantidiagonal}\Dskip
\Dbloc{\Tbottomleftdot\Trightdots\Tbottomrightdot\Tbottomdots}\Dbloc{\Tantidiagonal}\Dskip
\Dbloc{\Tleftdots\Tantidiagonal}
\end{Tiles}
$$
\caption{}{\label{fig1}}
\end{center}

Let $n(\lambda)$ be the number of points of the dotted line on $x+y=l+1$, which are not at the top or bottom. For example, we have $n((0,\dots,0))=0$, and for the $l$-partition $\lambda$ of Figure \ref{fig1}, we have $n(\lambda)=3$.

We shall describe the construction of this line in a more formal way. 

Let $k=n(\lambda)$. Set $i_n=l+1$ for all $n>k$, $i_k=\lambda_1$, $i_{k-1}=\lambda_{l-i_k+2}$, $i_{k-2}=\lambda_{l-i_{k-1}+2}$, $\dots$, $i_{1}=\lambda_{l-i_2+2}$ and $i_p=0$ for all $p\leqslant 0$. We have $0<i_1<\dots <i_k<l+1$. The dotted line describes the shape of an $l$-partition 
\begin{equation}\label{partmax}
\lambda^M=(i_k^{l-i_k+1}, i_{k-1}^{i_k-i_{k-1}},\dots, i_1^{i_2-i_1}, 0^{i_1-1}).
\end{equation}

Any $l$-partition $\lambda$ whose associated dotted line gives the partition  $\lambda^M$ must necessarily contain the cells 
$$
(1,i_k),(l-i_k+2,i_{k-1}),(l-i_{k-1}+2,i_{k-2}),\dots, (l-i_2+2,i_1).
$$ 
The ``minimal'' $l$-partition in the sense of inclusion of diagrams that contains these cells is : 
\begin{equation}\label{partmin}
\lambda^m=(i_k, i_{k-1}^{l-i_k+1},i_{k-2}^{i_k-i_{k-1}},\dots, i_1^{i_3-i_2}, 0^{i_2-2}).
\end{equation}

For example, take $l=13$ and $\lambda=(10,10,9,6,5,4,4,3,1,1,1,1,0)$, as above, we have $n(\lambda)=k=3$, $i_3=10,i_2=5,i_1=1$. The three distinguished cells above are :  
$$
(1,10), (5,5), (10,1).
$$
So we have :
$$
\begin{array}{c}
\lambda^M=(10,10,10,10, 5,5,5,5,5,1,1,1,1), \mbox{ and} \\
\lambda^m=(10,5,5,5,5,1,1,1,1,1,0,0,0).
\end{array}
$$
These partitions are illustrated in the figure below, where the distinguished cells are marked with $\times$, and $\lambda^M$ is the partition corresponding to the dotted line outside $\lambda$, while $\lambda^m$ is the one which corresponds to the dotted line inside $\lambda$. 
$$
\begin{Tiles}{1}\label{fig2}
\Dbloc{\Ttop\Tleft}\Dbloc{\Ttop}\Dbloc{\Ttop}\Dbloc{\Ttop}\Dbloc{\Ttop}\Dbloc{\Ttop\Tbottomleftdot\Tbottomdots}\Dbloc{\Ttop\Tbottomleftdot\Tbottomdots}\Dbloc{\Ttop\Tbottomleftdot\Tbottomdots}\Dbloc{\Ttop\Tbottomleftdot\Tbottomdots}\Dbloc{\Ttop\Tright\Tbottomleftdot\Tbottomrightdot\Tbottomdots\Dtext{c}{\times}}\Dbloc{\Ttopleftdot\Ttopdots}\Dbloc{\Ttopleftdot\Ttopdots}\Dbloc{\Ttopleftdot\Ttopdots}\Dbloc{\Ttopleftdot\Ttopdots\Ttoprightdot\Tantidiagonal}\Dskip
\Dbloc{\Tleft}\Dspace\Dspace\Dspace\Dspace\Dbloc{\Tleftdots\Tbottomleftdot}\Dspace\Dspace\Dspace\Dbloc{\Tright\Tbottom}\Dspace\Dspace\Dbloc{\Tantidiagonal}\Dbloc{\Ddoubletext{c}{x+y=l+1}}\Dskip
\Dbloc{\Tleft}\Dspace\Dspace\Dspace\Dspace\Dbloc{\Tleftdots\Tbottomleftdot}\Dbloc{\Tbottom}\Dbloc{\Tbottom}\Dbloc{\Tbottom\Tright}\Dbloc{\Trightdots\Tbottomrightdot}\Dspace\Dbloc{\Tantidiagonal}\Dskip
\Dbloc{\Tleft}\Dspace\Dspace\Dspace\Dspace\Dbloc{\Tleftdots\Tbottomleftdot\Tbottom\Tright}\Dbloc{\Tbottomleftdot\Tbottomdots}\Dbloc{\Tbottomleftdot\Tbottomdots}\Dbloc{\Tbottomleftdot\Tbottomdots}\Dbloc{\Tbottomleftdot\Tbottomdots\Trightdots\Tbottomrightdot}\Dbloc{\Tantidiagonal\Tbottomleftdot}\Dskip
\Dbloc{\Tleft}\Dbloc{\Tbottomleftdot\Tbottomdots}\Dbloc{\Tbottomleftdot\Tbottomdots}\Dbloc{\Tbottomleftdot\Tbottomdots\Tbottomrightdot}\Dbloc{\Tbottom\Dtext{c}{\times}}\Dbloc{\Tleft}\Dspace\Dspace\Dspace\Dbloc{\Tantidiagonal}\Dskip
\Dbloc{\Tleft\Trightdots\Tbottomrightdot}\Dspace\Dspace\Dspace\Dbloc{\Tleft\Trightdots\Tbottomrightdot}\Dspace\Dspace\Dspace\Dbloc{\Tantidiagonal}\Dskip
\Dbloc{\Tleft\Trightdots\Tbottomrightdot}\Dspace\Dspace\Dspace\Dbloc{\Tleft\Trightdots\Tbottomrightdot}\Dspace\Dspace\Dbloc{\Tantidiagonal}\Dskip
\Dbloc{\Tleft\Trightdots\Tbottomrightdot}\Dbloc{\Tbottom}\Dbloc{\Tbottom\Tright}\Dbloc{\Ttop}\Dspace\Dbloc{\Tleftdots\Tbottomleftdot}\Dbloc{\Tantidiagonal}\Dskip
\Dbloc{\Tleft}\Dbloc{\Tleft\Tbottomleftdot\Tbottomdots}\Dbloc{\Tbottomleftdot\Tbottomdots}\Dbloc{\Tbottomleftdot\Tbottomdots}\Dbloc{\Tbottomleftdot\Tbottomdots\Tbottomrightdot}\Dbloc{\Tleftdots\Tantidiagonal}\Dskip
\Dbloc{\Tleft\Tbottomleftdot\Tbottomdots\Tbottomrightdot\Tright\Dtext{c}{\times}}\Dspace\Dspace\Dspace\Dbloc{\Tantidiagonal}\Dskip
\Dbloc{\Tleft\Tright}\Dspace\Dspace\Dbloc{\Tantidiagonal}\Dskip
\Dbloc{\Tleft\Tbottom\Tright}\Dspace\Dbloc{\Tantidiagonal}\Dskip
\Dbloc{\Tleftdots\Tbottomleftdot\Trightdots\Tbottomrightdot\Tbottomdots}\Dbloc{\Tantidiagonal}\Dskip
\Dbloc{\Tleftdots\Tbottomleftdot\Tantidiagonal}
\end{Tiles}
$$

Observe that the difference $\lambda^M\setminus\lambda^m$ is a disjoint union of $k$ rectangles, denoted by $R_k,\dots,R_1$ from the top to the bottom. More precisely,
$$
R_j=\{(s,t);l-i_{p+1}+2<s<l-i_{p}+2 \mbox{ and } i_{p-1} <t \leqslant i_p\}.
$$
Inside each rectangle $R_j$, the shape of $\lambda$ could be described by a word $M_j$, whose letters are $d$ and $l$, where $d$ indicates a down step and $l$ indicates a left step. 

Let $h_j$ be the number of $d$ in $M_j$, which is at most the height of $R_j$ and let $l_j$ be the number of $l$ in $M_j$, which is the length of $R_j$. Then we have :
$$
\begin{array}{c}
h_j=i_{j+1}-i_j-1 \mbox{ if } j\not=1, \mbox{ and } h_j\leqslant i_{j+1}-i_j-1 \mbox{ if } j=1, \\
l_j=i_j-i_{j-1},
\end{array}
$$
so $h_j\leqslant l_{j+1}-1$ and the equality holds if $j\not= 1$.
Furthermore the shape of $M_j$ is $l^{a_{j,0}}dl^{a_{j,1}}d \dots dl^{a_{j,h_j}}$, where $a_{j,i}\in\nset$, $0\leqslant i\leqslant h_j$. We then have that :
\begin{equation}\label{equation_a_i}
l_j=\sum_{i=0}^{h_j} a_{j,i}.
\end{equation}
In the above example, we have $M_3=dldl^3dl$, $M_2=lddldl^2d$ and $M_1=ddl$.

We shall now generate a Dyck path step by step from the $M_j$. We call a peak of a Dyck path, an occurrence of $ud$ in the corresponding Dyck word. 

First, let $D_{k+1}$ be the Dyck path of length $2(l+1-i_k)$ containing $l+1-i_k$ peaks. Next, we have $M_k=l^{a_{k,0}}dl^{a_{k,1}}d \dots dl^{a_{k,h_k}}$. We insert $a_{k,0}$ peaks on the first peak of the already existing Dyck path $D_{k+1}$, then $a_{k,1}$ peaks on the second peak, and so on. We call $D_k$ the new Dyck path obtained. Observed that the highest peaks of $D_k$ are exactly those newly inserted, so there are exactly $l_k$. Since $h_{k-1}\leqslant l_k-1$, the procedure can then be iterated by inserting peaks only on highest peaks. Each intermediate Dyck path obtained after using the word $M_j$ is denoted by $D_{j}$. At the end, we obtain a Dyck path $D_{\lambda}$ of length $2l+2$.

For example, if we consider $l=7$ and $\lambda=(5,3,1,1,1,0,0)$ :
$$
\begin{Tiles}{1}
\Dbloc{\Ttop\Tleft}\Dbloc{\Ttop\Tbottomleftdot\Tbottomdots}\Dbloc{\Ttop\Tbottomleftdot\Tbottomdots}\Dbloc{\Ttop\Tbottom}\Dbloc{\Ttop\Tbottom\Tright}\Dbloc{}\Dbloc{}\Dbloc{\Tantidiagonal}\Dskip
\Dbloc{\Tleft\Tbottomrightdot\Trightdots}\Dbloc{\Tbottom}\Dbloc{\Tright\Tbottom}\Dspace\Dspace\Dbloc{\Tleftdots\Tbottomleftdot}\Dbloc{\Tantidiagonal}\Dbloc{\Ddoubletext{c}{x+y=l+1}}\Dskip
\Dbloc{\Tleft\Tright}\Dbloc{\Tbottomleftdot\Tbottomdots}\Dbloc{\Tbottomleftdot\Tbottomdots}\Dbloc{\Tbottomleftdot\Tbottomdots}
\Dbloc{\Tbottomleftdot\Tbottomdots\Tbottomrightdot\Trightdots}\Dbloc{\Tantidiagonal}\Dskip
\Dbloc{\Tleft\Tright\Tbottomdots\Tbottomrightdot\Tbottomleftdot}\Dspace\Dspace\Dspace\Dbloc{\Tantidiagonal}\Dskip
\Dbloc{\Tleft\Tright\Tbottom}\Dspace\Dspace\Dbloc{\Tantidiagonal}\Dskip
\Dbloc{\Tleftdots\Tbottomleftdot\Trightdots\Tbottomrightdot}\Dspace\Dbloc{\Tantidiagonal}\Dskip
\Dbloc{\Tleftdots\Tbottomleftdot\Trightdots\Tbottomrightdot\Tbottomdots}\Dbloc{\Tantidiagonal}\Dskip
\Dbloc{\Tantidiagonal}\Dskip
\end{Tiles}
$$
\caption{}\label{example1}

We have $n(\lambda)=k=2$, $i_2=5$ and $i_1=1$. Then $D_3$ is the following Dyck path : 
\begin{center}
$$
\Gitter(7,3)(0,0)
\Koordinatenachsen(7,2)(0,0)
\Pfad(0,0),343434\endPfad
\Label\lu{0}(0,0)
\hskip4.5cm
$$
\end{center}

We have $M_2=l^2dl^2d$, so we first insert $2$ peaks on the first peak of $D_3$, then again two peaks on the second one. We obtain $D_2$:
\begin{center}
$$
\Gitter(15,4)(0,0)
\Koordinatenachsen(15,4)(0,0)
\Pfad(0,0),33434433434434\endPfad
\Label\lu{0}(0,0)
\hskip7.5cm
$$
\end{center}
Finally, $M_1=dl$ so we insert $a_{1,0}=0$ peak on the first highest peak of $D_2$ and $a_{1,1}=1$ peak on the second highest peak. We obtain $D_{\lambda}$:
\begin{center}
$$
\Gitter(18,5)(0,0)
\Koordinatenachsen(18
,5)(0,0)
\Pfad(0,0),3343344433434434\endPfad
\Label\lu{0}(0,0)
\Label\u{\scriptstyle 1}(1,0)
\Label\u{\scriptstyle 2}(2,0)
\Label\u{\scriptstyle 3}(3,0)
\Label\u{\scriptstyle 4}(4,0)
\Label\u{\scriptstyle 5}(5,0)
\Label\u{\scriptstyle 6}(6,0)
\Label\u{\scriptstyle 7}(7,0)
\Label\u{\scriptstyle 8}(8,0)
\Label\u{\scriptstyle 9}(9,0)
\Label\u{\scriptstyle 10}(10,0)
\Label\u{\scriptstyle 11}(11,0)
\Label\u{\scriptstyle 12}(12,0)
\Label\u{\scriptstyle 13}(13,0)
\Label\u{\scriptstyle 14}(14,0)
\Label\u{\scriptstyle 15}(15,0)
\Label\u{\scriptstyle 16}(16,0)
\Label\l{\scriptstyle 1}(0,1)
\Label\l{\scriptstyle 2}(0,2)
\Label\l{\scriptstyle 3}(0,3)
\hskip10.5cm
$$
\end{center}

By \cite{AKOP}, we have the following proposition :
\begin{prop}\label{map_D}
The map $D : \lambda \mapsto D_{\lambda}$ defines a bijection between the set of $l$-partitions and the set of Dyck paths of length $2l+2$.
\end{prop}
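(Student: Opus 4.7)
The plan is to exhibit an explicit inverse to $D$. Both $l$-partitions and Dyck paths of length $2l+2$ are counted by the Catalan number $C_{l+1}$, so injectivity and surjectivity are equivalent, but a direct inverse construction is more informative.

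The crucial observation is that the construction of $D_{\lambda}$ is height-stratified. All peaks of the base path $D_{k+1}$ sit at height $1$, and the operation ``insert $a$ peaks on a peak'' replaces a local $ud$ at height $h$ by $u(ud)^a d$: for $a\geq 1$ this absorbs the original peak at height $h$ and creates $a$ new peaks at height $h+1$, while for $a=0$ the peak is left alone. Propagating this through the iterated construction, each peak of $D_{\lambda}$ at the maximum height $k+1$ was necessarily inserted when processing $M_1$, so the number of such peaks equals $l_1=i_1$. More generally, the peaks of $D_{\lambda}$ at height $k+2-j$ are precisely those inserted by $M_j$ and never absorbed by a subsequent $M_{j-1},\dots,M_1$. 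I would prove this by induction on $j$, using the bound $h_{j-1}\leq l_j-1$ (with equality for $j-1\neq 1$) to guarantee that $M_{j-1}$ acts on exactly the right number of then-highest peaks and preserves the Dyck-path condition.

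Granting this stratification, the inverse map is transparent. Given a Dyck path $P$ of length $2l+2$ with maximum peak-height $k+1$, count its peaks at height $k+1$ to obtain $l_1=i_1$, and read off $M_1$ from their combinatorial distribution: the height-$(k+1)$ peaks of $P$ partition into consecutive runs of sizes $a_{1,0},\dots,a_{1,h_1}$, separated by the height-$k$ peaks of $P$ lying in between, so that $M_1 = l^{a_{1,0}} d l^{a_{1,1}} \cdots d l^{a_{1,h_1}}$. Next, replace each local pattern $u(ud)^{a_{1,i}}d$ at height $k+1$ by a single $ud$ at height $k$; the resulting Dyck path has maximum peak-height $k$ and is exactly $D_2$. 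Iterating this peeling $k$ times produces the flat Dyck path $D_{k+1}$, from which $i_k$ is read off as $l+1$ minus the peak count. Combined with the $i_j$'s recovered along the way, this yields $\lambda^M$ and $\lambda^m$ via (\ref{partmax}) and (\ref{partmin}), and the recorded words $M_j$ pin $\lambda$ down uniquely inside each rectangle $R_j$.

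The main obstacle is the inductive proof of the height-stratification lemma: one must verify simultaneously that at stage $j$ of the forward construction the strictly highest peaks of $D_j$ are exactly the $l_j$ peaks most recently inserted by $M_j$, that there are enough of them to accommodate the next round of insertions, and that the local replacement $ud\mapsto u(ud)^a d$ never violates the Dyck-path condition (which, by (\ref{equation_a_i}), reduces to checking $h_{j-1}+1\leq l_j$). Once this bookkeeping is settled, each step of the forward construction is manifestly reversible, and the peeling procedure above furnishes a two-sided inverse to $D$.
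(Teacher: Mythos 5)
The paper does not actually prove this proposition: it is quoted from \cite{AKOP} (``By \cite{AKOP}, we have the following proposition''), and Section \ref{section_AKOP_bijection} only recalls the forward construction. Your argument therefore supplies a proof where the paper offers a citation. The route you take --- stratify the peaks of $D_{\lambda}$ by height, show that the peaks at height $k+2-j$ are exactly those created by $M_j$ and never absorbed afterwards, then invert by peeling off the top layer --- is the natural way to invert the AKOP construction, and the stratification claim itself is sound: inserting $a\geq 1$ peaks on a peak at height $h$ destroys that peak and creates $a$ peaks at height $h+1$, stage $j-1$ only touches highest peaks of $D_j$, and at every stage at least one $a_{j,i}$ is positive because $l_j=i_j-i_{j-1}>0$. (One small conflation: nonnegativity of the path is automatic for the local substitution $ud\mapsto u(ud)^a d$; the inequality $h_{j-1}+1\leq l_j$ is needed only so that $M_{j-1}$ has no more slots than $D_j$ has highest peaks.)

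One point in the peeling step deserves more care than you give it. The runs of height-$(k+1)$ peaks record only the \emph{nonzero} $a_{1,i}$; to reconstruct $M_1$ you must decide, for each highest peak of $D_2$ that received no insertion, whether it is a slot with $a_{1,i}=0$ or lies beyond the last slot (recall that $h_1$ may be strictly smaller than $l_2-1$), and the two readings yield different partitions. For $j\geq 2$ there is no such ambiguity, since $h_j=l_{j+1}-1$ is forced and trailing zeros are permitted. For $j=1$ the resolution is that the boundary word $M_1$ always terminates with a left step --- the trace of $\lambda$ inside $R_1$ ends precisely when it reaches the edge of the diagram, which happens on a horizontal move --- so $a_{1,h_1}\neq 0$. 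Hence $h_1$ is read off as the index of the last highest peak of $D_2$ that did receive an insertion, and every earlier untouched highest peak is a genuine zero slot. With this observation added (and the routine check that arbitrary data read off a Dyck path assembles into a partition fitting inside $T_l$), your peeling map is a well-defined two-sided inverse and the proof is complete.
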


\section{Dyck path and number of ``$udu$''}
Let $\lambda=(\lambda_1, \dots, \lambda_l)$ be an $l$-partition such that $n(\lambda)=k$. Let $D_{\lambda}$ be the Dyck path obtained from $\lambda$ as described in section \ref{section_AKOP_bijection}. We shall see how to count the number of ``$udu$'' contained in $D_{\lambda}$.

A peak could be followed by an $u$, a $d$ or nothing in the Dyck word. If it is followed by a $u$, we call it a $u$-peak. Each $u$-peak will give an $udu$ and vice versa.

Let $1\leqslant j\leqslant k+1$. Let $u_j$ be the number of $u$-peaks in the Dyck path $D_j$. For example, $D_{k+1}$ consists in $l-\lambda_1+1=l-i_k+1$ peaks, so it is easy to see that $u_{k+1}=l-\lambda_1$.

To construct $D_{j-1}$ from $D_j$, we add some peaks on the highest peaks of $D_j$. Then, one must understand how the insertion of $p$ peaks on a highest peak modifies the number of ``$udu$''. Consider a peak $P$ of maximal height on a Dyck path. If we add  $p$ peaks, the part of the Dyck word which corresponds to $P$ (which was $ud$) becomes $uudud\dots udd$ (with $p$ $ud$), so we obtain $p-1$ $udu$. If $P$ is a $u$-peak, then we also ``destroy'' the $udu$ given by $P$. So at the end, we only add $p-2$ $udu$. For example, in the following Dyck path which contains $2$ $udu$ :
\begin{center}
$$
\Gitter(10,3)(0,0)
\Koordinatenachsen(10,3)(0,0)
\Pfad(0,0),34334344\endPfad
\Label\lu{0}(0,0)
\hskip4.5cm
$$
\caption{}\label{Dyckpath}
\end{center}
if we add $2$ peaks on the first highest peak, we add $2-2=0$ $udu$, so we obtain the following Dyck path with still $2$ $udu$:
\begin{center}
$$
\Gitter(13,4)(0,0)
\Koordinatenachsen(13,4)(0,0)
\Pfad(0,0),343334344344\endPfad
\Label\lu{0}(0,0)
\hskip5.5cm
$$
\end{center}

If $P$ is not a $u$-peak, then we do not ``destroy'' a $udu$, so we effectly add $p-1$ ``$udu$''. For example, if we add $2$ peaks on the second highest peak of Figure \ref{Dyckpath}, we add $2-1=1$ $udu$, so we obtain $3$ $udu$ at the end:
\begin{center}
$$
\Gitter(13,4)(0,0)
\Koordinatenachsen(13,4)(0,0)
\Pfad(0,0),343343343444\endPfad
\Label\lu{0}(0,0)
\hskip5.5cm
$$
\end{center}

Set $a_{k+1,0}=l-i_k+1$, $M_{k+1}=l^{a_{k+1,0}}$, and $h_{k+1}=0$. We have seen that each word $M_j$ is in the form $l^{a_{j,0}}dl^{a_{j,1}}d \dots dl^{a_{j,h_j}}$. Let 
$$
\cala_j=\{(j,t); t\in\{0,\dots,h_j\}; a_{j,t}\not=0\},
$$ 
$$
\cala=\bigcup_{j=1}^k \cala_j.
$$ 
Recall from the construction that the number of highest peaks in $D_j$ is :
\begin{equation}\label{equation_highest_peak}
\sum_{t=0}^{h_j} a_{j,i}=l_j.
\end{equation} 
Observe that a highest peak is a $u$-peak if it is not the last one of a consecutive group of highest peaks. Hence, the $q$-th peak of $D_j$ is not a $u$-peak if and only if there exists $r\in \{0,\dots, h_j\}$ such that $q=\sum_{s=0}^r a_{j,s}$. Set 
$$
\call_p=\left\{
(p,t); \mbox{there exists }\ 0\leqslant r\leqslant h_{p+1}; t+1=\sum_{q=0}^r a_{p+1,q}
\right\},
$$
$$
\begin{array}{ccc}
\calu_p=\cala_p\setminus \call_p, &
\displaystyle\call=\bigcup_{p=1}^k \call_p,& 
\displaystyle\calu=\bigcup_{p=1}^k \calu_p.
\end{array}
$$
Thus $\call_j$ corresponds exactly to the set of highest peaks in $D_j$ which are not $u$-peaks and where we insert new peaks. It follows that :
$$
u_{j-1}=u_j +\sum_{(j-1,t)\in \calu_{j-1}}(a_{j-1,t}-2) +\sum_{(j-1,t)\in \call_{j-1}}(a_{j-1,t}-1).
$$

At the end of the construction, the number of ``$udu$'' in $D_{\lambda}$ is $u_1$. By induction, we have  
$$
u_1= l-\lambda_1 +\displaystyle \sum_{(j,t)\in\calu}(a_{j,t}-2) + \sum_{(j,t)\in\call}(a_{j,t}-1).
$$
Since $\sum_{(j,t)\in\cala}a_{j,t}=\lambda_1$, we obtain the following proposition :
\begin{prop}\label{proposition_partition}
Let $\lambda$ be an $l$-partition. Then, the number of ``$udu$'' in $D_{\lambda}$ is $l-2\sharp \calu-\sharp \call$.
\end{prop}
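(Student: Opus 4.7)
The plan is to identify the count of ``$udu$'' in $D_\lambda$ with the number $u_1$ of $u$-peaks in $D_1 = D_\lambda$ (using that $u$-peaks are in bijection with $udu$ occurrences, as already observed), and then compute $u_1$ by telescoping down from $u_{k+1}$ through the iterative construction $D_{k+1} \to D_k \to \dots \to D_1$. The base case is immediate: $D_{k+1}$ consists of $l + 1 - i_k = l+1 - \lambda_1$ consecutive peaks of equal height, each followed by $ud$ except the last, so $u_{k+1} = l - \lambda_1$.

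The core local computation is the effect of inserting $p$ new peaks on a highest peak $P$ of a Dyck path. The factor $ud$ in the Dyck word corresponding to $P$ is replaced by $u(ud)^p d$, which contains $p - 1$ new $udu$ subwords; if $P$ was itself a $u$-peak in the original path then its original $udu$ is destroyed by the insertion, giving a net gain of $p - 2$, otherwise the net gain is $p - 1$. I would then check that this local count is exactly what is encoded by the sets $\calu$ and $\call$: when passing from $D_j$ to $D_{j-1}$ via $M_{j-1}$, for each $(j-1, t) \in \cala_{j-1}$ we insert $a_{j-1,t}$ peaks on the $(t+1)$-th highest peak of $D_j$, and by Equation~\eqref{equation_highest_peak} and the description of the highest peaks of $D_j$ coming from $M_j$, this peak fails to be a $u$-peak precisely when $(j-1,t) \in \call_{j-1}$. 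Summing the local contributions yields the recursion
$$
u_{j-1} = u_j + \sum_{(j-1,t) \in \calu_{j-1}}(a_{j-1,t} - 2) + \sum_{(j-1,t) \in \call_{j-1}}(a_{j-1,t} - 1).
$$

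Telescoping this recursion from $j = k+1$ down to $j = 1$ gives
$$
u_1 = l - \lambda_1 + \sum_{(j,t) \in \calu}(a_{j,t} - 2) + \sum_{(j,t) \in \call}(a_{j,t} - 1).
$$
Since $\calu$ and $\call$ form a partition of $\cala$, and $\sum_{(j,t) \in \cala} a_{j,t} = \sum_{j=1}^{k} l_j = i_k - i_0 = \lambda_1$, the linear terms in $a_{j,t}$ aggregate to $\lambda_1$, canceling the $-\lambda_1$ and leaving $l - 2\sharp\calu - \sharp\call$, which is the claimed formula.

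The main obstacle is not any single computation but the bookkeeping: one must be careful with the index shift (so that $\call_p$ genuinely records the non-$u$-peaks among those highest peaks of $D_{p+1}$ on which $M_p$ writes), and one must verify that inserting peaks on a highest peak does not alter whether any \emph{other} peak is a $u$-peak, so that the local gain formula can be summed cleanly. Once this is in place, the result reduces to the telescoping and the identity $\sum a_{j,t} = \lambda_1$.
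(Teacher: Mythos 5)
Your proposal is correct and follows essentially the same route as the paper: the identification of ``$udu$'' occurrences with $u$-peaks, the base count $u_{k+1}=l-\lambda_1$, the local gain of $a_{j,t}-2$ or $a_{j,t}-1$ according as the target highest peak is or is not a $u$-peak (encoded by $\calu$ and $\call$), the telescoped recursion for $u_1$, and the final cancellation via $\sum_{(j,t)\in\cala}a_{j,t}=\lambda_1$. No substantive differences to report.
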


To illustrate this, we could follow again the construction of the Dyck path which corresponds to $\lambda=(5,3,1,1,1,0,0)$. We first have the Dyck path $D_3$ in Section \ref{section_AKOP_bijection}, with $n-\lambda_1+1=3$ peaks, and $u_3=2$. Then we use the word $M_2=l^2dl^2d=l^{a_{2,0}}dl^{a_{2,1}}d$, where $a_{2,0}, a_{2,1}\in \call_2$, so we add $a_{2,0}-2+a_{2,1}-2=0$ peak. So $u_2=2$. Then we use the word $M_1=dl=l^{a_{1,0}}dl^{a_{1,1}}$, where $a_{1,1}\in \calu_1$, so we add $a_{1,1}-1=0$ peak. Hence, $u_1=2$.

\section{Ad-nilpotent ideals of a parabolic subalgebra and Dyck paths}\label{ideaux_Dyck_path}

Let $I \subset \Pi$ and $\ilie$ be an ad-nilpotent ideal of
$\plie_I$.  We set 
$$
\Phi_{\ilie} =  \{ \alpha \in \Delta^+ \setminus \Delta_I;\ 
\mathfrak{g}_{\alpha } \subseteq \ilie \}.
$$ 
Then $\ilie = 
\bigoplus_{\alpha \in \Phi_{\ilie} } \mathfrak{g}_{\alpha}$ and if 
$\alpha \in \Phi_{\ilie}$, $\beta \in \Delta^+\cup \Delta_I$ are such that $\alpha +\beta
\in \Delta^+$, then $\alpha +\beta \in \Phi_{\ilie}$. 

Conversely, set 
$$ 
\calf_I = \{ \Phi \subset \Delta^+\setminus \Delta_I;\mbox{if } \alpha \in \Phi,
\beta \in \Delta^+\cup \Delta_I, \alpha +\beta \in \Delta^+, 
\mbox{then} \ \alpha +\beta\in \Phi \}.
$$
Then for $\Phi \in \calf_I$, $\ilie_{\Phi} = \bigoplus_{\alpha \in
\Phi} \mathfrak{g}_{\alpha}$ is an ad-nilpotent ideal of $\plie_I$. 

We obtain therefore a bijection
$$
\{\mbox{ad-nilpotent ideals of } \plie_I \}  \rightarrow  \calf_I ,\ 
\ilie  \mapsto  \Phi_{\ilie}.
$$

Recall the following partial order on $\Delta^+$ : 
$\alpha \leqslant \beta$ if $\beta -\alpha$ is a 
sum of positive roots. Then it is easy to see that $\Phi \in
\calf_{\emptyset}$ 
if and only if for all $\alpha \in \Phi, \beta \in \Delta^+$, such
that $\alpha \leqslant \beta$, then 
$\beta \in \Phi$. 

Let $\Phi\in\calf_{\emptyset}$. Set
$$
\Phi_{min}=\{\beta\in\Phi;\beta-\alpha\not\in\Phi, \mbox{ for all } \alpha\in \Delta^+\}.
$$
Then, $\Phi_{min}$ is an antichain of $\Delta^+$ with respect to the above partial order. Conversely, if we consider an antichain $\Gamma$, then, the set of roots which are bigger than any one of the elements of $\Gamma$ is an element of $\calf_{\emptyset}$.

As in \cite{CP}, we display the positive 
roots $\Delta^+$ in the Ferrers diagram $T_l$ of $(l,l-1,\dots,1)$ as follows : we assign to each box in the $i$-th
row and the $j$-th column, labelled $(i,j)$ in $T_l$, 
a positive root $t_{i,j}=\alpha_i+\cdots +\alpha_{l-j+1}$, $1\leqslant i,
j\leqslant l$. 

For example, for $l=5$, we have :
$$
\overbrace{
\vbox{\offinterlineskip
\halign{#&#&#&#&#&#\cr
\htrait&\htrait&\htrait&\htrait&\htrait\cr
\sboite{$t_{1,1}$}&\sboite{$t_{1,2}$}&\sboite{$t_{1,3}$}&\sboite{$t_{1,4}$}&%
\sboite{$t_{1,5}$}&\vtrait\cr
\htrait&\htrait&\htrait&\htrait&\htrait\cr
\sboite{$t_{2,1}$}&\sboite{$t_{2,2}$}&\sboite{$t_{2,3}$}%
&\sboite{$t_{2,4}$}&\vtrait\cr
\htrait&\htrait&\htrait&\htrait\cr
\sboite{$t_{3,1}$}&\sboite{$t_{3,2}$}&\sboite{$t_{3,3}$}&\vtrait\cr
\htrait&\htrait&\htrait\cr
\sboite{$t_{4,1}$}&\sboite{$t_{4,2}$}&\vtrait\cr
\htrait&\htrait\cr
\sboite{$t_{5,1}$}&\vtrait\cr
\htrait\cr
}
}
}^{l}
$$

Observe that given two positive roots $\alpha,\beta$, $\alpha$ is bigger than or equal to $\beta$ if the box corresponding to $\alpha$ is in the quadrant north-west of the box corresponding to $\beta$. 
It follows easily that the map which sends an element $\Phi\in\calf_{\emptyset}$ to the subdiagram of $T_l$ consisting of the boxes corresponding to the roots of $\Phi$ defines a bijection between $\calf_{\emptyset}$ and the set of northwest flushed 
subdiagrams of $T_l$. Hence, by Section \ref{partition_Dyck_path}, we obtain a bijection $\sigma$ from $\calf_{\emptyset}$ to the set of $l$-partitions.

By Proposition \ref{map_D}, $D\circ\sigma$ is a bijection from $\calf_{\emptyset}$ to the set of Dyck paths of length $2l+2$.

For $\Phi\in\calf_{\emptyset}$, set 
$$
I_{\Phi}=\{\alpha\in\Pi;\Phi\in\calf_{\{\alpha\}}\}.
$$
It is the maximal element of $\{I\subset\Pi; \Phi\in\calf_I\}$.  We shall see how to link the number of $udu$ of the Dyck path 
$(D\circ\sigma)(\Phi)$ and the cardinality of $I_{\Phi}$.

Set $\alpha_{i,j}=\alpha_i +\dots+\alpha_j$, for all $1\leqslant i\leqslant j \leqslant l$. We have easily the following lemma :
\begin{lemme}\label{lemme_Phi_min}
Let $I\subset\Pi$. An element $\Phi\in\calf_{\emptyset}$ is an element of $\calf_I$ if and only if for all $\alpha_{i,j}\in\Phi_{min}$, we have $\alpha_i,\alpha_j\not\in I$.
\end{lemme}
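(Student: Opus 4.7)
The plan is to unfold the definition of $\calf_I$ and exploit the type $A$ root combinatorics. Write $\Delta_I^+=\Delta_I\cap\Delta^+$. Given $\Phi\in\calf_{\emptyset}$, membership in $\calf_I$ amounts to two extra conditions: (i) $\Phi\cap\Delta_I^+=\emptyset$, and (ii) for every $\alpha\in\Phi$ and $\gamma\in\Delta_I^+$ with $\alpha-\gamma\in\Delta^+$, one has $\alpha-\gamma\in\Phi$; the case $\beta\in\Delta^+$ of the definition is already built into $\Phi\in\calf_{\emptyset}$. I will repeatedly use two elementary facts: $\alpha_{i,j}\leqslant\alpha_{p,q}$ iff $p\leqslant i$ and $j\leqslant q$, equivalently iff $\{\alpha_i,\dots,\alpha_j\}\subseteq\{\alpha_p,\dots,\alpha_q\}$; and, given $\alpha_{a,b}\in\Delta^+$, the only $\gamma\in\Delta^+$ with $\alpha_{a,b}-\gamma\in\Delta^+$ are $\gamma=\alpha_{a,q}$ with $a\leqslant q<b$ (producing $\alpha_{q+1,b}$) or $\gamma=\alpha_{p,b}$ with $a<p\leqslant b$ (producing $\alpha_{a,p-1}$).

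For the forward direction, I take $\alpha_{i,j}\in\Phi_{min}$ and suppose for contradiction that $\alpha_i\in I$. If $i<j$, then $\alpha_i\in\Delta_I^+$ and $\alpha_{i,j}-\alpha_i=\alpha_{i+1,j}\in\Delta^+$, so condition (ii) forces $\alpha_{i+1,j}\in\Phi$, contradicting the minimality of $\alpha_{i,j}$. If $i=j$, then $\alpha_{i,j}=\alpha_i\in\Delta_I^+$, contradicting (i). The case $\alpha_j\in I$ is symmetric, so $\alpha_i,\alpha_j\notin I$.

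For the backward direction, assuming $\alpha_i,\alpha_j\notin I$ for every $\alpha_{i,j}\in\Phi_{min}$, I verify (i) and (ii) in turn. For (i), if $\alpha_{a,b}\in\Phi\cap\Delta_I^+$, pick $\alpha_{i,j}\in\Phi_{min}$ with $\alpha_{i,j}\leqslant\alpha_{a,b}$; then $\{\alpha_i,\dots,\alpha_j\}\subseteq\{\alpha_a,\dots,\alpha_b\}\subseteq I$, so $\alpha_i\in I$, against the hypothesis. For (ii), I treat the case $\gamma=\alpha_{a,q}$ with $\alpha=\alpha_{a,b}$ (the case $\gamma=\alpha_{p,b}$ being entirely symmetric): here $\alpha_a,\dots,\alpha_q\in I$. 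Pick $\alpha_{i,j}\in\Phi_{min}$ with $\alpha_{i,j}\leqslant\alpha_{a,b}$, so $a\leqslant i$ and $j\leqslant b$. Since $\alpha_i\notin I$ while $\alpha_a,\dots,\alpha_q\in I$, the index $i$ lies outside $\{a,\dots,q\}$, forcing $i\geqslant q+1$. Together with $j\leqslant b$ this gives $\alpha_{i,j}\leqslant\alpha_{q+1,b}=\alpha_{a,b}-\gamma$, whence $\alpha_{a,b}-\gamma\in\Phi$ because $\Phi\in\calf_{\emptyset}$ is upward closed in the partial order.

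The only genuinely delicate step is the verification of (i) in the backward direction, where one must observe that descending to a minimal element of $\Phi$ below $\alpha_{a,b}$ preserves the support-in-$I$ property, allowing the hypothesis on the endpoint simple roots to kick in. The rest is the routine enumeration of the two allowable shapes of $\gamma\in\Delta_I^+$ that keep $\alpha_{a,b}-\gamma$ in $\Delta^+$, together with the support characterization of the partial order.
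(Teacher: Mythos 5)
Your proof is correct and complete. The paper itself gives no argument for this lemma (it is stated with ``We have easily the following lemma'', the details being left to the reader or to the reference [R]), so there is no official proof to compare against; your unfolding of the definition of $\calf_I$ into the two conditions (i) and (ii), the classification of the $\gamma\in\Delta^+$ with $\alpha_{a,b}-\gamma\in\Delta^+$, and the descent to a minimal element of $\Phi$ below $\alpha_{a,b}$ in the backward direction constitute exactly the routine verification the author is implicitly appealing to.
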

It follows from Lemma \ref{lemme_Phi_min} that :
$$
I_{\Phi}=\Pi\setminus\{\alpha_i\in\Pi;\mbox{there exists } \alpha_{i,j} \mbox{ or } \alpha_{l,i}\in \Phi_{min}\}.
$$

The problem is not to count the same root twice. For example, in $A_7$, for $\Phi_{min}=\{\alpha_{1,3},\alpha_{2,5},\alpha_{5,7}\}$, we have $\Pi\setminus I_{\Phi}=\{\alpha_1,\alpha_2,\alpha_3,\alpha_5,\alpha_7\}$ but we find $\alpha_5$ in the beginning or in the end of the support of two roots in $\Phi_{min}$. So if we set :
$$
L=\{\alpha_{i,j}\in\Phi_{min};\mbox{ there exists a root of shape } \alpha_{p,i}\in\Phi_{min}\},
$$
$$
U=\Phi_{min}\setminus L,
$$
we obtain that 
\begin{equation}\label{cardinal_I_max}
\sharp I_{\Phi}=l-2\sharp U -\sharp L.
\end{equation}

Let $\lambda=\sigma(\Phi)$, $F$ its Ferrers diagram and $D_{\lambda}=D(\lambda)$ be the Dyck path which corresponds to $\lambda$ via the AKOP-bijection. Let $\alpha_{i,j}\in\Phi_{min}$. Then the cell $(i,l+1-j)=(i,\lambda_i)$ of $\alpha_{i,j}$ in $F$ is a south-east corner of the diagram and two cases are possible : there exists a rectangle $R_{p}$ such that $(i,\lambda_i)\in R_{p}$ or $(i,\lambda_i)$ is not in any rectangle. If the last case occurs, then $(i,l+1-j)$ is above a rectangle $R_p$. For example, if $\lambda=(5,3,1,1,1,0,0)$, we have that $\alpha_{2,5},\alpha_{5,7}$ are in the first case and $\alpha_{1,3}$ is in the second case.
$$
\begin{Tiles}{1.1}
\Dbloc{\Ttop\Tleft}\Dbloc{\Ttop\Tbottomleftdot\Tbottomdots}\Dbloc{\Ttop\Tbottomleftdot\Tbottomdots}\Dbloc{\Ttop\Tbottom}\Dbloc{\Ttop\Tbottom\Tright\Dtext{c}{\alpha_{1,3}}}\Dbloc{}\Dbloc{}\Dbloc{\Tantidiagonal}\Dskip
\Dbloc{\Tleft\Tbottomrightdot\Trightdots}\Dbloc{\Tbottom}\Dbloc{\Tright\Tbottom\Dtext{c}{\alpha_{2,5}}}\Dspace\Dspace\Dbloc{\Tleftdots\Tbottomleftdot}\Dbloc{\Tantidiagonal}\Dbloc{\Ddoubletext{c}{x+y=l+1}}\Dskip
\Dbloc{\Tleft\Tright}\Dbloc{\Tbottomleftdot\Tbottomdots}\Dbloc{\Tbottomleftdot\Tbottomdots}\Dbloc{\Tbottomleftdot\Tbottomdots\Dtext{c}{R_2}}
\Dbloc{\Tbottomleftdot\Tbottomdots\Tbottomrightdot\Trightdots}\Dbloc{\Tantidiagonal}\Dskip
\Dbloc{\Tleft\Tright\Tbottomdots\Tbottomrightdot\Tbottomleftdot}\Dspace\Dspace\Dspace\Dbloc{\Tantidiagonal}\Dskip
\Dbloc{\Tleft\Tright\Tbottom\Dtext{c}{\alpha_{5,7}}}\Dspace\Dspace\Dbloc{\Tantidiagonal}\Dskip
\Dbloc{\Tleftdots\Tbottomleftdot\Trightdots\Tbottomrightdot}\Dspace\Dbloc{\Tantidiagonal}\Dskip
\Dbloc{\Tleftdots\Tbottomleftdot\Trightdots\Tbottomrightdot\Tbottomdots\Dtext{c}{R_1}}\Dbloc{\Tantidiagonal}\Dskip
\Dbloc{\Tantidiagonal}\Dskip
\end{Tiles}
$$

If $\alpha_{i,j}$ is in the rectangle $R_{p}$, then the cell $(i,\lambda_i)=(i,l-j+1)$ which corresponds to $\alpha_{i,j}$ in $F$ satisfies :
\begin{equation}\label{equation_i}
l-i_{p+1}+2<i<l-i_p+2,
\end{equation}
\begin{equation}\label{equation_lambda}
i_{p-1}<\lambda_i\leqslant i_p,
\end{equation}
and so we have :
\begin{equation}\label{equation_j}
l-i_{p}+1\leqslant j< l-i_{p-1}+1.
\end{equation}

If $\alpha_{i,j}$ is above the rectangle $R_{p}$, then the cell $(i,l-j+1)$ which corresponds to $\alpha_{i,j}$ in $F$ satisfies :
\begin{equation}\label{equation_upper}
(i,l-j+1)=(l-i_{p+1}+2,i_p).
\end{equation}

Define the map $r$ from $\Phi_{min}$ to $\{1,\dots,k\}$ which to $\alpha_{i,j}$ associate the integer $r(\alpha_{i,j})=p$ such that $\alpha_{i,j}$ is in or immediately above the rectangle $R_{p}$.

Let $\alpha_{i,j}\in\Phi_{min}$ and $p=r(\alpha_{i,j})$. Since the cell $(i,l-j+1)$ which contains $\alpha_{i,j}$ in $T_l$ is a south-east corner, there is a horizontal line under this cell. If 
$c=(i,l-j+1)$ is in the rectangle $R_p$, then it is at the row $q=i-(l-i_{p+1}+2)$ of $R_p$ and the line under $c$ correspond to the part $l^{a_{p,q}}$ in $M_p$. Furthermore $(p,q)\in\cala_p$.

If $c$ is immediately above the rectangle $R_p$, then the line under $c$ corresponds to $l^{a_{p,0}}$ in $M_p$ and $(p,0)\in\cala_p$. Since in this case, by \eqref{equation_upper} we have $(i,l-j+1)=(l-i_{p+1}+2,i_p)$, we obtain that $i-(l-i_{p+1}+2)=0$. We can define in any case the map $s$ from $\Phi_{min}$ to $\nset$ by : 
\begin{equation}\label{equation_q}
s(\alpha_{i,j})=i-(l-i_{r(\alpha_{i,j})+1}+2).
\end{equation}
Furthermore, in both cases, the line under the cell which contains $\alpha_{i,j}$ is the part $l^{a_{r(\alpha_{i,j}),s(\alpha_{i,j})}}$ in $M_{r(\alpha_{i,j})}$ and $(r(\alpha_{i,j}),s(\alpha_{i,j}))\in\cala_{r(\alpha_{i,j})}$.

Conversely, let $(p,q)\in\cala_p$. Then, there is a horizontal line under the row $i=q-l-i_{p+1}+2$ of $F$ which is under a south-east corner of $F$. This south-east corner is a cell $(i,\lambda_i)$ which corresponds to a root $\alpha_{i,j}$, where $l-j+1=\lambda_i$. So we have a bijection :
$$
\begin{array}{ccrcl}
\Psi &:&\Phi_{min}&\rightarrow& \cala \\
&&\alpha_{i,j}&\mapsto & (r(\alpha_{i,j}),s(\alpha_{i,j}))
\end{array}
$$

\begin{lemme}
We have $\Psi(U)=\calu$ and $\Psi(L)=\call$. 
\end{lemme}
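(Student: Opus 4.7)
The plan is to establish the single equivalence $\alpha_{i,j}\in L \iff \Psi(\alpha_{i,j})\in\call$. Since $\Psi$ is a bijection from $\Phi_{min}$ to $\cala$, since $\Phi_{min}=L\sqcup U$, and since $\cala_p=\calu_p\sqcup(\call_p\cap\cala_p)$ by the very definition of $\calu_p$, this equivalence immediately yields both $\Psi(L)=\call$ (in particular forcing $\call\subseteq\cala$) and $\Psi(U)=\calu$.

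Fix $\alpha_{i,j}\in\Phi_{min}$, set $(p,t)=\Psi(\alpha_{i,j})$, and put $c=l-i+1$; the relation $i=l-i_{p+1}+2+t$ then gives $c=i_{p+1}-t-1$. By the diagram--antichain correspondence, the condition $\alpha_{i,j}\in L$ translates into the geometric statement: \emph{$\lambda$ has a south-east corner at column $c$ in some row $p'\leqslant i$}; the case $p'=i$ covers the degenerate situation in which $\alpha_{i,j}$ is itself the simple root $\alpha_i$, so that $\alpha_{p',i}=\alpha_i$ works automatically. On the other side, the description of $\lambda$ inside the rows associated to $R_{p+1}$ coming from $M_{p+1}=l^{a_{p+1,0}}dl^{a_{p+1,1}}d\cdots dl^{a_{p+1,h_{p+1}}}$ gives $\lambda_{l-i_{p+2}+3+r}=i_{p+1}-\sum_{q=0}^{r}a_{p+1,q}$ for $r=0,\ldots,h_{p+1}$, the extreme value $r=h_{p+1}$ producing the gap row $l-i_{p+1}+2$ just below $R_{p+1}$. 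Hence $(p,t)\in\call_p$ says exactly that $\lambda$ attains the value $c$ somewhere in the extended range $\{l-i_{p+2}+3,\ldots,l-i_{p+1}+2\}$.

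To match the two conditions, rows strictly above this extended range lie in some $R_q$ with $q\geqslant p+2$ and satisfy $\lambda\geqslant i_{p+1}>c$, ruling out $\lambda=c$. If $t<h_p$ then $c>i_p$ and the rows in $R_p$ satisfy $\lambda\leqslant i_p<c$, so any south-east corner at column $c$ must appear in $R_{p+1}$'s extended range and the equivalence is immediate. The delicate case is $t=h_p$: here $c=i_p$ and the gap row $l-i_{p+1}+2$ always has $\lambda=i_p=c$, which forces $(p,h_p)\in\call_p$ (for $p\not=1$ because $h_p+1=l_{p+1}=\sum_{q=0}^{h_{p+1}}a_{p+1,q}$; when $p=1$ this automaticity holds precisely when $h_1=i_2-i_1-1$, and otherwise the situation reduces to the case $t<h_p$). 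On the $L$-side, the last row where $\lambda=i_p$ is always $\leqslant i$: if it is strictly smaller than $i$ one recovers a genuine $\alpha_{p',i}\in\Phi_{min}$ with $p'<i$, while if it equals $i$ then one verifies that necessarily $\alpha_{i,j}=\alpha_i$ and the trivial choice $p'=i$ applies. I expect this boundary case $t=h_p$, together with keeping track of whether the south-east corner at column $i_p$ surfaces in $R_{p+1}$'s rows or farther down in $R_p$'s rows, to be the main technical obstacle.
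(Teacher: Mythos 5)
Your proposal is correct and follows essentially the same route as the paper: reduce to the single equality $\Psi(L)=\call$ by complementation, translate both membership conditions into the geometry of the Ferrers diagram (a south-east corner at column $c=i_{p+1}-t-1$ on one side, $t+1$ being a partial sum of the $a_{p+1,q}$ on the other), and treat the boundary situation $c=i_p$ separately --- this is the same case analysis the paper performs, merely packaged as one biconditional instead of two implications. One shared caveat: as literally defined $\call_p$ need not be contained in $\cala_p$ (for $\lambda=(5,3,1,1,1,0,0)$ one has $(1,3)\in\call_1\setminus\cala_1$), so the equivalence only yields $\Psi(L)=\call\cap\cala$ and your parenthetical that it ``forces $\call\subseteq\cala$'' is not accurate; but the paper's own statement and proof rest on the same implicit reading of $\call$ as $\call\cap\cala$, so this is an inherited imprecision rather than a gap in your argument.
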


\begin{proof}
Since $L=\Phi_{min}\setminus U$ and $\call=\cala\setminus\calu$, it suffices to prove that $\Psi(L)=\call$. 

Let $\alpha_{i,j}\in L$. Set $p=r(\alpha_{i,j})$, $q=s(\alpha_{i,j})$ and let $c=(i,\lambda_i)$ be the cell which corresponds to $\alpha_{i,j}$ in $F$. 

First assume that $i=j$. Then, we have  $c=(i,l-i+1)$. If $c\in R_{p}$, then by \eqref{equation_i} and \eqref{equation_j}, we have :
$$
i=l-i_p+1,
$$
so by \eqref{equation_q}, we have that $q=i_{p+1}-i_p-1$ so by \eqref{equation_a_i}, $a_{p,q}\in\call_{p}$.

If $c$ is above $R_{p}$, then by \eqref{equation_upper}, we have $c=(i,l-i+1)=(l-i_{p+1}+2,i_p)$, so $q=0$ and $i_{p+1}-i_p=1$, hence by \eqref{equation_a_i} we also have $a_{p,q}\in\call_{p}$.

Now assume that $i\not=j$ and there exists a root of shape $\alpha_{m,i}\in\Phi_{min}$. Set $t=r(\alpha_{m,i})$. Let $(m,\lambda_m)=(m,l-i+1)$ be the cell which corresponds to $\alpha_{m,i}$ in $\lambda$. If $c\in R_{p}$, then by \eqref{equation_i}, we have 
$$
i_p\leqslant \lambda_m\leqslant i_{p+1}-2.
$$
So either $(m,\lambda_m)\in R_{p+1}$ or $(m,\lambda_m)=(l-i_{p+1}+2,i_p)$.

If $(m,\lambda_m)\in R_{p+1}$, then between the columns $i_{p+1}$ and $\lambda_m=l-i+1$, we have $i_{p+1}-(l-i+1)$ columns, so there exists $n$ such that $\sum_{u=0}^n a_{p+1,u}=i_{p+1}-(l-i+1)$. Furthermore, by \eqref{equation_q}, we have $q=i-(l-i_{p+1}+2)$, hence $a_{p,q}\in\call_{p}$.

If $(m,\lambda_m)=(l-i_{p+1}+2,i_p)$, then $i=l-i_p+1$ and by \eqref{equation_q}, we have that :
$$
q=(l-i_p+1)-(l-i_{p+1}+2)=i_{p+1}-i_p-1.
$$
Hence, by \eqref{equation_a_i}, we have $a_{p,q}\in\call_{p}$.

Conversely, let $a_{p,q}\in \call_p$, then there exists $0\leqslant t\leqslant h_{p+1}$ such that $q+1=\sum_{f=0}^t a_{p+1,f}$. There also exists $\alpha_{i,j}\in\Phi_{min}$ such that $r(\alpha_{i,j})=p$ and $s(\alpha_{i,j})=q$. By \eqref{equation_q}, we have that :
$$
q=i-(l-i_{p+1}+2).
$$
Observe that for all $0\leqslant j\leqslant h_{p+1}$, there exists a south-east corner $(n_j,\lambda_{n_j})$ in or above the rectangle $R_{p+1}$ such that :
$$
\lambda_{n_j}=i_{p+1}-\sum_{f=0}^j a_{p+1,f}.
$$
So there exists a south-east corner $(n_j,\lambda_{n_j})$ such that :
$$
\lambda_{n_j}=i_{p+1}-(q+1)=l-i+1.
$$
The element of $\Phi_{min}$ which corresponds to the cell $(n_j,\lambda_{n_j})$ is $\alpha_{n_j,i}$, so we have $\alpha_{i,j}\in L$.
\end{proof}

It follows by Proposition \ref{proposition_partition} and equation \eqref{cardinal_I_max} that we have the following theorem :
\begin{theoreme}
There is a bijection between the elements $\Phi\in\calf_{\emptyset}$ such that $\sharp I_{\Phi}=r$ and the Dyck paths of length $2l+2$ having $r$  ``$udu$''.
\end{theoreme}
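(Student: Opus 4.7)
The plan is simply to assemble the pieces already established. Sections \ref{partition_Dyck_path} and \ref{section_AKOP_bijection} give, via Proposition \ref{map_D} and the bijection $\sigma$ between $\calf_\emptyset$ and $l$-partitions (through northwest-flushed subdiagrams of $T_l$), a bijection $D\circ\sigma : \calf_\emptyset \to \{\text{Dyck paths of length } 2l+2\}$. So it suffices to verify that this bijection carries the statistic $\sharp I_\Phi$ on the source to the statistic ``number of $udu$'' on the target; restricting to the fiber over $r$ then yields the theorem.

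The verification is now a one-line bookkeeping argument. On the algebraic side, equation \eqref{cardinal_I_max} reads $\sharp I_\Phi = l - 2\sharp U - \sharp L$, where $U,L$ partition $\Phi_{min}$. On the combinatorial side, writing $\lambda = \sigma(\Phi)$, Proposition \ref{proposition_partition} reads $\#\{udu \text{ in } D_\lambda\} = l - 2\sharp\calu - \sharp\call$. The immediately preceding lemma provides a bijection $\Psi: \Phi_{min}\to \cala$ with $\Psi(U)=\calu$ and $\Psi(L)=\call$; in particular $\sharp U = \sharp\calu$ and $\sharp L = \sharp\call$. Substituting gives
\[
\sharp I_\Phi \;=\; l - 2\sharp U - \sharp L \;=\; l - 2\sharp\calu - \sharp\call \;=\; \#\{udu \text{ in } (D\circ\sigma)(\Phi)\},
\]
which is exactly the statistic-matching we needed.

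There is no real obstacle left at this stage: the conceptual work has been done in constructing $\Psi$ and establishing $\Psi(U)=\calu$, $\Psi(L)=\call$ (the preceding lemma, which itself required the careful case analysis on whether the south-east corner of $F$ attached to a root $\alpha_{i,j}\in\Phi_{min}$ lies inside a rectangle $R_p$ or immediately above it, via equations \eqref{equation_i}--\eqref{equation_upper}). All that remains for the theorem statement is to invoke the bijection $D\circ\sigma$ and the two cardinality identities, then restrict to the preimage of Dyck paths with exactly $r$ occurrences of $udu$. This gives the claimed bijection.
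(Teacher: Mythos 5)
Your proposal is correct and follows exactly the paper's own argument: the paper also deduces the theorem by combining the bijection $D\circ\sigma$ with Proposition \ref{proposition_partition}, equation \eqref{cardinal_I_max}, and the lemma giving $\Psi(U)=\calu$ and $\Psi(L)=\call$. Nothing is missing.
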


Since the number of Dyck paths having a fixed number of $udu$ is calculated in \cite{Su}, we have the following corollary :
\begin{corollaire}\label{I_max_A}
The number of elements of $\Phi\in\calf_{\emptyset}$ such that $\sharp I_{\Phi}=r$ is
$$
\left(\begin{array}{c}
l\\
r
\end{array}
\right)
\sum_{k=0}^{[l-r]/2} \left(\begin{array}{c}
l-r\\
2k
\end{array}
\right)
\calc_{k}
$$
where $\calc_{k}$ denotes the $k$-th Catalan number.
\end{corollaire}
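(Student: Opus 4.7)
The plan is to combine the bijection theorem just established with a known enumeration of Dyck paths refined by the ``$udu$''-statistic. The entire argument is short because the substantive work has already been done.

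First, I would apply the theorem directly: it identifies the set $\{\Phi\in\calf_{\emptyset} : \sharp I_{\Phi}=r\}$ with the set of Dyck paths of length $2l+2$ having exactly $r$ occurrences of ``$udu$''. This reduction is immediate from the bijection $D\circ\sigma$ together with Proposition \ref{proposition_partition} and equation \eqref{cardinal_I_max}, so the problem becomes a pure enumeration of Dyck paths of length $2l+2$ according to the number of ``$udu$'' patterns.

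Next, I would cite Sun's enumeration \cite{Su}: the number of Dyck paths of length $2n$ having exactly $r$ occurrences of ``$udu$'' is
$$\binom{n-1}{r}\sum_{k=0}^{\lfloor(n-1-r)/2\rfloor}\binom{n-1-r}{2k}\calc_{k}.$$
Specializing $n=l+1$ produces precisely the formula claimed in the corollary. Since the refined enumeration of Dyck paths by consecutive patterns is treated thoroughly in \cite{Su}, no further computation is required.

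There is no serious obstacle here: the content of the corollary is essentially the bijective theorem above combined with an external enumeration result. As a sanity check, one may note that $\sum_{k}\binom{m}{2k}\calc_{k}$ equals the $m$-th Motzkin number $M_{m}$, so the formula also reads $\binom{l}{r}M_{l-r}$. This factorization suggests the combinatorial interpretation ``choose $r$ slots among $l$ for the ``$udu$'' patterns, then record the remaining structure as a Motzkin path of length $l-r$'', which could be promoted into a self-contained bijective proof if one preferred to avoid the reference to \cite{Su}.
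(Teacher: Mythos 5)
Your proof is correct and follows the paper's own route exactly: the corollary is obtained by combining the bijection theorem with Sun's enumeration of Dyck paths by the number of ``$udu$''s, specialized to semilength $l+1$. The Motzkin-number sanity check $\binom{l}{r}M_{l-r}$ is a nice addition consistent with the table in the paper's example.
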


\begin{exemple}
Let $N_r^l$ be the number of elements $\Phi\in\calf_{\emptyset}$ such that $\sharp I_{\Phi}=r$. We have by Corollary \ref{I_max_A} :
$$
\begin{array}{|c||c|c|c|c|c|}
\hline
r &N_r^{1}&N_r^{2}&N_r^{3} &N_r^{4}&N_r^{5}\\
\hline
0 &1 &2 &4 &9 &21\\
\hline
1  &1 &2 &6 &16 &45 \\
\hline
2&  &1 &3 &12 &40\\
\hline
3&& &1& 4& 20\\
\hline
4&&&& 1& 5\\
\hline
5& & & &&1 \\
\hline
\end{array}
$$
\end{exemple}


\section{Duality}\label{section_dualite}
We shall construct a duality between the elements of $\calf_{\emptyset}$ such that $\sharp \Phi_{min}=p$ and those such that $\sharp \Phi_{min}=l-p$.

\begin{prop}\label{prop_phi_min1}
Let $\Phi\in\calf_{\emptyset}$. Let $N$ be the number of peaks in $(D\circ\sigma)(\Phi)$, then we have :
$$
\sharp \Phi_{min}=l-(N-1).
$$
\end{prop}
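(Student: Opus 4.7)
The plan is to compute the number $N$ of peaks in $(D\circ\sigma)(\Phi)$ along the iterative construction of Section~\ref{section_AKOP_bijection}, and then to translate the count to $\Phi_{min}$ via the bijection $\Psi:\Phi_{min}\to\cala$ set up in the lemma preceding the theorem. Set $\lambda=\sigma(\Phi)$ and $k=n(\lambda)$.

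The heart of the proof is a step-by-step peak count. By construction, $D_{k+1}$ has exactly $l+1-i_k$ peaks. To pass from $D_{j+1}$ to $D_j$, one reads the word $M_j=l^{a_{j,0}}dl^{a_{j,1}}d\cdots dl^{a_{j,h_j}}$ and, for each $t$ with $a_{j,t}\neq 0$, inserts $a_{j,t}$ peaks on one of the highest peaks of $D_{j+1}$. The corresponding substitution $ud\mapsto uud(ud)^{a_{j,t}-1}d$ replaces one peak by $a_{j,t}$ peaks, so the peak count grows by $a_{j,t}-1$ per nonzero $a_{j,t}$ and is unchanged when $a_{j,t}=0$. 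Summing over $t$ and invoking \eqref{equation_a_i} gives
$$
\sharp\{\text{peaks of }D_j\}-\sharp\{\text{peaks of }D_{j+1}\}=\sum_{(j,t)\in\cala_j}(a_{j,t}-1)=l_j-\sharp\cala_j.
$$

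Telescoping from $j=k$ down to $j=1$, and using $\sum_{j=1}^k l_j=\sum_{j=1}^k(i_j-i_{j-1})=i_k$, I obtain
$$
N=(l+1-i_k)+\sum_{j=1}^k(l_j-\sharp\cala_j)=l+1-\sharp\cala.
$$
Since $\Psi$ is a bijection, $\sharp\Phi_{min}=\sharp\cala$, and hence $\sharp\Phi_{min}=l+1-N=l-(N-1)$. The only potentially delicate point is checking that successive insertions within a single $M_j$ do not interfere with one another, but this is immediate: each insertion is made on a currently-highest peak of $D_{j+1}$ and produces peaks lying strictly above it, so the elementary substitution rule applies unambiguously at every step.
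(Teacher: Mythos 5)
Your proof is correct and follows essentially the same route as the paper: the paper likewise starts from the $l-\lambda_1+1$ peaks of $D_{k+1}$, observes that each insertion of $a_{p,q}$ peaks destroys the host peak and so contributes a net $a_{p,q}-1$, sums to get $N=l+1-\sharp\cala$ (using $\sum_{(p,q)\in\cala}a_{p,q}=\lambda_1=i_k$, which is your $\sum_j l_j=i_k$ in disguise), and concludes via the bijection between $\cala$ and $\Phi_{min}$. Your write-up merely organizes the count as an explicit telescoping sum and adds the (correct) remark about non-interference of successive insertions.
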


\begin{proof}
Let $\lambda=\sigma(\Phi)$ be the corresponding $l$-partition. Recall that the construction of $D(\lambda)$ is iterative. At each step, when we add $a_{p,q}$ peaks to a 
highest peak, for $(p,q)\in\cala_p$, we also ``destroy'' the initial highest peak. So, we add only $a_{p,q}-1$ peaks. At the end of the construction we have :
$$
l-\lambda_1+1+\sum_{p=1}^k \sum_{(p,q)\in\cala_p}(a_{p,q}-1)
$$
peaks. Since $\sum_{p=1}^k \sum_{(p,q)\in\cala_p}a_{p,q}=\sum_{(p,q)\in\cala} a_{p,q}=\lambda_1$ and $\cala$ is in bijection with $\Phi_{min}$ by section \ref{ideaux_Dyck_path}, we obtain the result.
\end{proof}

\begin{prop}\label{prop_phi_min2}
Let $\Phi\in\calf_{\emptyset}$ and $p$ be the number of peaks in $(P\circ\sigma)(\Phi)$, then we have :
$$
\sharp \Phi_{min}=p-1.
$$
\end{prop}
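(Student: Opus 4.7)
The plan is to reduce the identity to a combinatorial count on the Ferrers diagram $F$ of $\lambda=\sigma(\Phi)$, by expressing both $\sharp\Phi_{min}$ and the peak count $p$ in terms of the distinct positive parts of $\lambda$. First I would note, following the identification of Section \ref{ideaux_Dyck_path}, that $\Phi$ corresponds to the set of cells of $F$ inside $T_l$, and the partial order on $\Delta^+$ becomes the northwest--southeast order on cells. The minimal elements of $\Phi$ are therefore the south-east corners of $F$, that is the cells $(i,\lambda_i)$ satisfying $\lambda_i>\lambda_{i+1}$ (with the convention $\lambda_{l+1}=0$). Hence $\sharp\Phi_{min}$ equals the number of distinct positive parts of $\lambda$.

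Next I would count the peaks of $P(\lambda)$ directly from the construction of Section \ref{partition_Dyck_path}. Traversing the unrotated figure from the ``bottom'' endpoint of the antidiagonal $x+y=l+1$ to its ``top'' endpoint, $P(\lambda)$ is built from three consecutive pieces: the bottom dotted vertical segment (pure up-moves), the south-east staircase boundary of $F$ (alternating up and right segments), and the top dotted horizontal segment (pure right-moves). Under the $45^{\circ}$ clockwise rotation, up-moves become up-steps of the Dyck path and right-moves become down-steps; peaks of $P(\lambda)$ are therefore in bijection with the maximal runs of up-moves in the unrotated path. The bottom dotted segment, of length $l+1-\sharp\{i:\lambda_i>0\}\geqslant 1$, contributes one such run; the top dotted segment contributes none; and inside the staircase of $F$ each maximal block of consecutive rows of equal positive value $\lambda_i$ contributes one further run (its right side). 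These blocks are in bijection with the distinct positive parts of $\lambda$, hence with $\Phi_{min}$. Summing, $p=1+\sharp\Phi_{min}$, which yields the desired equality.

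The main obstacle is the careful verification in the second step that no spurious peak arises at the two junctions where the dotted segments meet the staircase of $F$: one must check that these are valleys (down-steps immediately followed by up-steps of the Dyck path) rather than peaks. This is a direct inspection once coordinates are fixed, after which the peak count above is complete and the proposition follows.
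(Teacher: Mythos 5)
Your argument is correct and follows the same route as the paper, whose proof of Proposition \ref{prop_phi_min2} is a one-line appeal to the construction of $P$ in Section \ref{partition_Dyck_path}; you have simply written out the peak count (initial dotted up-run gives one peak, each maximal block of equal positive parts gives one more, and every maximal up-run is indeed followed by a down-step since the path ends in $d^{\,l+1-\lambda_1}$ with $\lambda_1\leqslant l$). One small correction to your closing remark: the two junctions are not valleys but peaks, and that is exactly what your count requires --- the bottom dotted up-run is immediately followed by a right-move because the lowest positive part $\lambda_{\ell}$ is nonzero, and the last up-move along row $1$ is immediately followed by the top dotted right-run, so each of these runs closes into precisely one peak and nothing merges or goes uncounted.
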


\begin{proof}
The result is clear by the construction of $(P\circ\sigma)(\Phi)$ defined in Section \ref{partition_Dyck_path}.
\end{proof}

\begin{theoreme}
The map $\sigma^{-1}\circ P^{-1}\circ D\circ\sigma$ induces a bijection from $\calf_{\emptyset}$ to $\calf_{\emptyset}$ which sends 
$\Phi\in\calf_{\emptyset}$ such that $\sharp \Phi_{min}=p$ to $\Psi\in\calf_{\emptyset}$ such that $\sharp \Psi_{min}=l-p$.
\end{theoreme}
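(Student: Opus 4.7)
The plan is to observe that the stated map is manifestly a bijection since each of its four constituents ($\sigma$, $D$, $P^{-1}$, $\sigma^{-1}$) is a bijection on the sets it connects. What requires genuine argument is only the cardinality statement $\sharp\Psi_{\min}=l-\sharp\Phi_{\min}$, and this is meant to fall out immediately by combining Propositions~\ref{prop_phi_min1} and \ref{prop_phi_min2} with the defining relation $P\circ\sigma(\Psi)=D\circ\sigma(\Phi)$.

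More concretely, I would let $\Phi\in\calf_{\emptyset}$ be given, set $\Psi=(\sigma^{-1}\circ P^{-1}\circ D\circ\sigma)(\Phi)$, and denote by $N$ the number of peaks of the Dyck path $(D\circ\sigma)(\Phi)$. By construction of $\Psi$, this same integer $N$ is also the number of peaks of $(P\circ\sigma)(\Psi)$. Applying Proposition~\ref{prop_phi_min1} to $\Phi$ gives $\sharp\Phi_{\min}=l-(N-1)$, while applying Proposition~\ref{prop_phi_min2} to $\Psi$ gives $\sharp\Psi_{\min}=N-1$. Adding these identities yields $\sharp\Phi_{\min}+\sharp\Psi_{\min}=l$, which is the required duality: if $\sharp\Phi_{\min}=p$, then $\sharp\Psi_{\min}=l-p$.

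There is essentially no obstacle here, since the two peak-counting propositions have already been proved just above and the bijectivity of $\sigma$, $D$, $P$ has been established in the preceding sections. The only conceptual content is the observation that the two bijections $D$ and $P$ from $l$-partitions to Dyck paths count peaks in complementary ways (via Propositions~\ref{prop_phi_min1} and \ref{prop_phi_min2}), so composing $D$ with $P^{-1}$ exchanges ``$N-1$'' peaks and ``$l-(N-1)$'' peaks. I would also briefly remark that the displayed map is an involution up to swapping the roles of $D$ and $P$, but this is not needed for the statement. The proof is therefore expected to be a short paragraph essentially consisting of the two equalities above.
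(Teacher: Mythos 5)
Your proof is correct and is exactly the argument the paper intends: the theorem is stated without a separate proof precisely because it follows immediately from Propositions~\ref{prop_phi_min1} and \ref{prop_phi_min2} applied to the common Dyck path $(D\circ\sigma)(\Phi)=(P\circ\sigma)(\Psi)$, which is what you do. No gaps.
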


For example, in $sl_4(\cset)$, the element $\Phi=\{\theta\}\in\calf_{\emptyset}$ corresponds to the partition $\lambda=(1,0,0)$, and the Dyck path $D_{\lambda}$ is :
\begin{center}
$$
\Gitter(9,4)(0,0)
\Koordinatenachsen(9,4)(0,0)
\Pfad(0,0),33443434\endPfad
\Label\lu{0}(0,0)
\hskip4.5cm
$$
\end{center}
Then, $P^{-1}(D_{\lambda})=(3,2,0)$ which is the partition which corresponds to $\Psi$ such that $\Psi_{min}=\{\alpha_1, \alpha_2\}$.

\begin{remarque}
It was  proved in \cite{P} that when $\glie$ is a simple Lie algebra of type $A$ and $C$, the number of elements 
$\Phi\in\calf_{\emptyset}$ such that 
$\sharp\Phi_{min}=p$ is the same as the number of elements $\Phi\in\calf_{\emptyset}$ such that $\sharp\Phi_{min}=l-p$. 
But the duality of \cite{P} is not the same as the one defined above. For example, in $sl_4(\cset)$, if we consider $\Phi=\{\theta\}$ like above, the dual ideal defined by \cite{P} is $\Psi$ where $\Psi_{min}=\{\alpha_1+\alpha_2, \alpha_3\}$.

\end{remarque}


\end{document}